\title{The Hochschild homology and cohomology of $A(1)$.}
\author{A. Salch}
\address{Department of Mathematics, Wayne State University, Detroit, MI, USA}
\email{asalch@wayne.edu}
\newcommand{\dbtilde}[1]{\widetilde{\raisebox{0pt}[0.85\height]{$\widetilde{#1}$}}}
\theoremstyle{plain}
\newtheorem{prop}{Proposition}[section]
\newtheorem{theorem}[prop]{Theorem}
\newtheorem{corollary}[prop]{Corollary}
\newtheorem{setup for thm}[prop]{Setup for theorem}
\newtheorem{necessary condition}[prop]{Necessary condition}
\newtheorem*{unnumberedtheorem}{Theorem}
\newtheorem*{unn motivating questions}{Motivating Questions}
\newtheorem{lemma}[prop]{Lemma}
\newtheorem{motivating questions}[prop]{Motivating Questions}
\newtheorem{definition}[prop]{Definition}
\newtheorem{definition-proposition}[prop]{Definition-Proposition}
\newtheorem{definition-theorem}[prop]{Definition-Theorem}
\newtheorem{running notations}[prop]{Running notations}
\theoremstyle{definition}
\newtheorem{remark}[prop]{Remark}
\newtheorem{intuitive idea}[prop]{Intuitive idea}
\newtheorem*{history-and-status-of-this-paper}{History and status of this paper}
\newtheorem{observation}[prop]{Observation}
\newtheorem{convention}[prop]{Conventions}
\DeclareMathAlphabet{\mathcal}{OT1}{pzc}{m}{it}
\DeclareMathOperator{\BAR}{{\rm Bar}}
\DeclareMathOperator{\Sq}{{\rm Sq}}
\DeclareMathOperator{\op}{{\rm op}}
\DeclareMathOperator{\Ext}{{\rm Ext}}
\DeclareMathOperator{\Tor}{{\rm Tor}}
\begin{document}

\begin{abstract}
We compute the Hochschild homology and cohomology of $A(1)$, the subalgebra of the $2$-primary Steenrod algebra generated by the first two Steenrod squares,
$\Sq^1,\Sq^2$. The computation is accomplished using several May-type spectral sequences.
\end{abstract}

\maketitle

\section{Introduction.}
The $2$-primary Steenrod algebra $A$, that is, the algebra of stable natural endomorphisms of the mod $2$ cohomology functor on topological spaces, 
has generators $\Sq^1,\Sq^2,\Sq^3, \dots$, the {\em Steenrod squares.} The subalgebra of $A$ generated by the first two Steenrod squares,
$\Sq^1$ and $\Sq^2$, is called $A(1)$. The $\mathbb{F}_2$-algebra $A(1)$ is an eight-dimensional, graded, noncommutative, co-commutative Hopf algebra. 
The homological algebra of $A(1)$-modules effectively determines, via the Adams spectral sequence, 
the $2$-complete homotopy theory of spaces and spectra
smashed with the connective real $K$-theory spectrum $ko$. These ideas are all classical; an excellent reference for the Steenrod algebra is Steenrod's book
\cite{MR0145525}, and an excellent reference for $A(1)$-modules and the Adams spectral sequence is the third chapter of Ravenel's book~\cite{MR860042}.

The algebra $A(1)$ is well-used by homotopy theorists, but since the audience for this paper may include algebraists who have never worked with $A(1)$, we give a description of its structure. A minimal set of generators for $A(1)$, as a graded $\mathbb{F}_2$-algebra, is $\{ \Sq^1,\Sq^2\}$, with the former in degree $1$ and the latter in degree $2$. A minimal set of relations between these generators is $\Sq^1\Sq^1=0$ and $\Sq^2\Sq^2 = \Sq^1\Sq^2\Sq^1$. The coproduct on $A(1)$ is determined by the fact that $\Sq^1$ is primitive and $\Delta(\Sq^2) = \Sq^2\otimes 1 + \Sq^1 \otimes \Sq^1 + 1\otimes \Sq^2$. The augmentation map $\epsilon: A(1)\rightarrow \mathbb{F}_2$ is determined by the grading degrees and the fact that $A(1)$ is a graded Hopf algebra, i.e., $\epsilon(\Sq^1) = 0 = \epsilon(\Sq^2)$.

Among homotopy theorists, the algebra $A(1)$ is a venerable and standard ``test case'' for any new construction which is applicable to a noncommutative ring. Computing $HH_*(A(1),A(1))$, however, is a nontrivial task, and it seems that before this paper, this computation has 
never been successfully done. B\"{o}kstedt, in his extremely influential unpublished paper on topological Hochschild homology, computes
the Hochschild homology of $\pi_*(H\mathbb{F}_p\smash H\mathbb{F}_p)$, i.e., the Hochschild homology of {\em the linear dual} of the entire Steenrod algebra,
but this is very straightforward, since the dual of the Steenrod algebra is polynomial at $p=2$ and polynomial tensored with exterior at $p>2$.
For the same reason, it is also easy to compute the Hochschild homology of {\em the linear dual} of $A(1)$. But the linear dual of $A(1)$ is a completely different ring than $A(1)$, so this sheds no light on 
the Hochschild homology of $A(1)$ itself! The reader who might hope that the Hochschild homology of a finite-dimensional Hopf algebra $R$ might be linearly dual to the Hochschild homology of the dual Hopf algebra can consider the counterexample $R = k[x]/x^4$ with $k$ a field of characteristic two and $x$ primitive.

In this paper we compute $HH_*(A(1),A(1))$ by using two different filtrations on $A(1)$ and studying the spectral sequences in Hochschild homology arising from
these filtrations. These spectral sequences are the analogues in Hochschild homology of J. P. May's spectral sequence \cite{MR2614527} for computing $\Ext$ over the Steenrod algebra, so we think of these as ``May-type'' spectral sequences. 

Computing $HH_*(A(1),A(1))$ takes some work, since $A(1)$ is noncommutative, so $HH_*(A(1),A(1))$ does not have a natural product structure. As a consequence, the May-type spectral sequence converging to $HH_*(A(1),A(1))$ that one would 
construct in the most na\"{i}ve way is not {\em multiplicative}, i.e., it 
does not have a product satisfying a Leibniz rule. This makes the computation of differentials in that spectral sequence
basically intractable. Instead, we take the linear dual of the standard Hochschild chain complex on $A(1)$, and we use the co-commutative coproduct on $A(1)$
to give the cohomology of this linear dual cochain complex a product structure arising from the coproduct on $A(1)$ and the linear dual of the Alexander-Whitney map. As a consequence, while the product on the $\mathbb{F}_2$-algebra $A(1)$ is used to define the Hochschild homology $HH_*(A(1),A(1))$ as a $\mathbb{F}_2$-vector space, it is the coproduct on $A(1)$ which gives us the useful ring structure on $HH_*(A(1),A(1))^*$. This is in contrast to the more familiar situation of a commutative $k$-algebra $R$: it is the product on $R$ which is used both to define the $k$-vector space $HH_*(R,R)$ and to equip it with a product.

In Proposition~\ref{SSs and cocycle reps} we set up {\em multiplicative} spectral sequences
computing the cohomology of the linear dual cochain complex of the standard Hochschild chain complex of $A(1)$.
By an easy universal coefficient theorem argument (Proposition~\ref{duality between hh and cohh}), 
this cohomology is the $\mathbb{F}_2$-linear dual of the desired Hochschild homology $HH_*(A(1),A(1))$.

We then compute the differentials in these spectral sequences. In the end there are nonzero $d_1$ and $d_2$ differentials, and no nonzero differentials on any later terms of the spectral sequences. In~\ref{ss e2} and~\ref{ss e3} we present charts of the $E_2$ and $E_3\cong E_{\infty}$-pages of the the relevant spectral sequences.
Our charts are drawn using the usual Adams spectral sequence conventions, described below.
This is the best format if, for example, one wants to use this Hochschild homology as the input for an Adams spectral sequence,
and it also makes it easier to see the natural map from this Hochschild homology to the classical Adams spectral sequence computing $\pi_*(ko)^{\widehat{}}_2$, the $2$-complete
homotopy groups of the connective real $K$-theory spectrum $ko$, in Proposition~\ref{ss comparison with supertrivial coeffs} and in the charts~\ref{ss e3} and~\ref{classical may e3}.

In particular, the chart~\ref{ss e3} is a chart of the ($\mathbb{F}_2$-linear dual of the) Hochschild homology of $A(1)$, and gives our most detailed description
of $HH_*(A(1),A(1))$. We reproduce that chart here:
\begin{equation*}\begin{sseq}[grid=none,entrysize=10mm,labelstep=1]{0...11}{0...5}
 \ssmoveto 0 0 
 \ssdropbull
 \ssname{1}

 \ssmove 1 0
 \ssdropbull 
 \ssname{x10} 

 \ssmove 1 0
 \ssdropbull
 \ssname{x11}

 \ssmove 1 0
 \ssdropbull
 \ssname{x10x11}

 \ssmove 3 0
 \ssdropbull
 \ssname{x6}

 \ssmove{-6}{1} 
 \ssdropbull
 \ssname{h10}

 \ssmove 1 0
 \ssdropbull 
 \ssname{x10h10} 
 \ssdropbull
 \ssname{h11}

 \ssmove 1 0
 \ssdropbull
 \ssname{x11h10}

 \ssmove 1 0
 \ssdropbull
 \ssname{x11h11}

 \ssmove 2 0
 \ssdropbull
 \ssname{z}

 \ssmove 1 0
 \ssdropbull
 \ssname{h10x6}

 \ssmove 1 0
 \ssdropbull
 \ssname{h11x6}

 \ssmove{-7}{1}
 \ssdropbull
 \ssname{h10^2}
 
 \ssmove 1 0
 \ssdropbull
 \ssname{x10h10^2}

 \ssmove 1 0
 \ssdropbull
 \ssname{h11^2}

 \ssmove 3 0
 \ssdropbull
 \ssname{h10z}
 \ssdropbull
 \ssname{x10b20}

 \ssmove 1 0
 \ssdropbull
 \ssname{h10^2x6}
 \ssdropbull
 \ssname{x10^2b20}

 \ssmove 1 0
 \ssdropbull
 \ssname{x10^3b20}

 \ssmove 1 0
 \ssdropbull
 \ssname{h11^2x6}


 \ssmove{-8}{1}
 \ssdropbull
 \ssname{h10^3}

 \ssmove 1 0
 \ssdropbull
 \ssname{x10h10^3}

 \ssmove 3 0
 \ssdropbull
 \ssname{h10b20}

 \ssmove 1 0 
 \ssdropbull
 \ssname{h10^2z}
 \ssdropbull
 \ssname{x10h10b20}

 \ssmove 1 0
 \ssdropbull
 \ssname{h10^3x6}
 \ssdropbull
 \ssname{x10^2h10b20}

 \ssmove 1 0
 \ssdropbull
 \ssname{x10^3h10b20}

 \ssmove 2 0
 \ssdropbull
 \ssname{h10b20z}

 \ssmove 1 0
 \ssdropbull
 \ssname{x10h10b20z}

 \ssmove{-10}{1}
 \ssdropbull
 \ssname{h10^4}

 \ssmove 1 0
 \ssdropbull
 \ssname{x10h10^4}

 \ssmove 3 0
 \ssdropbull
 \ssname{h10^2b20}

 \ssmove 1 0
 \ssdropbull
 \ssname{h10^3z}
 \ssdropbull
 \ssname{x10h10^2b20}

 \ssmove 1 0
 \ssdropbull
 \ssname{h10^4x6}

 \ssmove 3 0
 \ssdropbull
 \ssname{h10^2b20z}

 \ssmove 1 0
 \ssdropbull
 \ssname{x10h10^2b20z}

 \ssgoto{1}
 \ssgoto{x10}
 \ssstroke
 \ssgoto{x10}
 \ssgoto{x11}
 \ssstroke
 \ssgoto{x11}
 \ssgoto{x10x11}
 \ssstroke
 \ssgoto{1}
 \ssgoto{h10}
 \ssstroke
 \ssgoto{1}
 \ssgoto{h11}
 \ssstroke

 \ssgoto{x10}
 \ssgoto{x10h10}
 \ssstroke
 \ssgoto{x10}
 \ssgoto{x11h10}
 \ssstroke

 \ssgoto{x11}
 \ssgoto{x11h10}
 \ssstroke
 \ssgoto{x11}
 \ssgoto{x11h11}
 \ssstroke
 \ssgoto{x10x11}
 \ssgoto{x11h11}
 \ssstroke

 \ssgoto{x6}
 \ssgoto{h10x6}
 \ssstroke
 \ssgoto{x6}
 \ssgoto{h11x6}
 \ssstroke
 \ssgoto{h11x6}
 \ssgoto{h11^2x6}
 \ssstroke

 \ssgoto{h10}
 \ssgoto{h10^2}
 \ssstroke
 \ssgoto{h10}
 \ssgoto{x10h10}
 \ssstroke
 \ssgoto{x10h10}
 \ssgoto{x11h10}
 \ssstroke[curve=.1]

 \ssgoto{x10h10}
 \ssgoto{x10h10^2}
 \ssstroke

 \ssgoto{h11}
 \ssgoto{x11h10}
 \ssstroke
 \ssgoto{x11h10}
 \ssgoto{x11h11}
 \ssstroke
 \ssgoto{h11}
 \ssgoto{h11^2}
 \ssstroke

 \ssgoto{h10^2}
 \ssgoto{h10^3}
 \ssstroke
 \ssgoto{h10^3}
 \ssgoto{h10^4}
 \ssstroke
 \ssgoto{h10^2}
 \ssgoto{x10h10^2}
 \ssstroke
 \ssgoto{h10^3}
 \ssgoto{x10h10^3}
 \ssstroke

 \ssgoto{x10h10^2}
 \ssgoto{x10h10^3}
 \ssstroke
 \ssgoto{x10h10^3}
 \ssgoto{x10h10^4}
 \ssstroke

 \ssgoto{h10^3}
 \ssgoto{x10h10^3}
 \ssstroke
 \ssgoto{h10^4}
 \ssgoto{x10h10^4}
 \ssstroke

 \ssgoto{z}
 \ssgoto{h10x6}
 \ssstroke
 \ssgoto{h10z}
 \ssgoto{h10^2x6}
 \ssstroke[curve=-.1]
 \ssgoto{h10^2z}
 \ssgoto{h10^3x6}
 \ssstroke[curve=-.1]
 \ssgoto{h10^3z}
 \ssgoto{h10^4x6}
 \ssstroke[curve=-.1]

 \ssgoto{z}
 \ssgoto{h10z}
 \ssstroke
 \ssgoto{h10z}
 \ssgoto{h10^2z}
 \ssstroke
 \ssgoto{h10^2z}
 \ssgoto{h10^3z}
 \ssstroke

 \ssgoto{h10x6}
 \ssgoto{h10^2x6}
 \ssstroke
 \ssgoto{h10^2x6}
 \ssgoto{h10^3x6}
 \ssstroke
 \ssgoto{h10^3x6}
 \ssgoto{h10^4x6}
 \ssstroke

 \ssgoto{h10b20}
 \ssgoto{x10h10b20}
 \ssstroke[curve=.1]
 \ssgoto{x10h10b20}
 \ssgoto{x10^2h10b20}
 \ssstroke[curve=.1]
 \ssgoto{x10^2h10b20}
 \ssgoto{x10^3h10b20}
 \ssstroke
 \ssgoto{x10b20}
 \ssgoto{x10^2b20}
 \ssstroke[curve=.1]
 \ssgoto{x10^2b20}
 \ssgoto{x10^3b20}
 \ssstroke
 \ssgoto{h10^2b20}
 \ssgoto{x10h10^2b20}
 \ssstroke[curve=.1]
 \ssgoto{h10b20z}
 \ssgoto{x10h10b20z}
 \ssstroke
 \ssgoto{h10^2b20z}
 \ssgoto{x10h10^2b20z}
 \ssstroke

 \ssgoto{h10b20}
 \ssgoto{h10^2b20}
 \ssstroke
 \ssgoto{x10h10b20}
 \ssgoto{x10h10^2b20}
 \ssstroke
 \ssgoto{x10b20}
 \ssgoto{x10h10b20}
 \ssstroke
 \ssgoto{x10^2b20}
 \ssgoto{x10^2h10b20}
 \ssstroke
 \ssgoto{x10^3b20}
 \ssgoto{x10^3h10b20}
 \ssstroke
 \ssgoto{h10b20z}
 \ssgoto{h10^2b20z}
 \ssstroke
 \ssgoto{x10h10b20z}
 \ssgoto{x10h10^2b20z}
 \ssstroke

 \ssgoto{x10b20}
 \ssgoto{x10^2h10b20}
 \ssstroke
 \ssgoto{x10^2b20}
 \ssgoto{x10^3h10b20}
 \ssstroke

 \ssgoto{h10^4}
 \ssmove 0 1
 \ssstroke[void,arrowto]
 \ssgoto{x10h10^4}
 \ssmove 0 1
 \ssstroke[void,arrowto]

 \ssgoto{h10^3z}
 \ssmove 0 1
 \ssstroke[void,arrowto]
 \ssgoto{h10^4x6}
 \ssmove 0 1
 \ssstroke[void,arrowto]
 \ssgoto{h10^2b20}
 \ssmove 0 1
 \ssstroke[void,arrowto]
 \ssgoto{x10h10^2b20}
 \ssmove 0 1
 \ssstroke[void,arrowto]
 \ssgoto{h10^2b20z}
 \ssmove 0 1
 \ssstroke[void,arrowto]
 \ssgoto{x10h10^2b20z}
 \ssmove 0 1
 \ssstroke[void,arrowto]

\end{sseq}\end{equation*}
The vertical axis is homological degree, so the row $s$ rows above the bottom of the chart is the associated graded $\mathbb{F}_2$-vector space of a filtration
on $HH_s(A(1),A(1))$. The horizontal axis is, following the tradition in homotopy theory, the Adams degree, i.e., the topological degree $u$ (coming from the topological grading on $A(1)$) minus the homological degree $s$. 
The horizontal lines in the chart describe comultiplications by certain elements in the linear dual Hopf algebra $\hom_{\mathbb{F}_2}(A(1),\mathbb{F}_2)$ of $A(1)$,
and the nonhorizontal lines describe certain operations in the linear dual of $HH_*(A(1),A(1))$, described in Convention \ref{conventions on ss charts}.
The entire pattern described by this chart is repeated every four vertical degrees and every eight horizontal degrees: there is a periodicity class (not pictured) in bidegree $(s=4,u-s=8)$.

Information about the $\mathbb{F}_2$-vector space dimension of $HH_*(A(1),A(1))$ in each grading degree is provided by Theorem~\ref{e-p series thm},
which we reproduce below. We do not describe any ring structure on $HH_*(A(1),A(1))$ because $A(1)$ is noncommutative and so there is no natural ring structure on its Hochschild homology.
\begin{unnumberedtheorem}
The $\mathbb{F}_2$-vector space dimension of $HH_n(A(1),A(1))$ is:
\[ \dim_{\mathbb{F}_2} HH_n(A(1),A(1)) = \left\{ 
    \begin{array}{ll} 
      2n+5 &\mbox{\ if\ } 2\mid n \\
      2n+6 &\mbox{\ if\ } n\equiv 1 \mod 4 \\
      2n+4 &\mbox{\ if\ } n\equiv 3 \mod 4 .\end{array}\right. \]
Hence the Poincar\'{e} series of the graded $\mathbb{F}_2$-vector space $HH_*(A(1),A(1))$ is
\[ \frac{ 5 + 8s + 9s^2 + 10s^3 + \frac{8s^4}{1-s}}{1-s^4} .\]

If we additionally keep track of the extra grading on $HH_*(A(1),A(1))$ coming from the topological grading on $A(1)$, then 
the Poincar\'{e} series of the bigraded $\mathbb{F}_2$-vector space $HH_{*,*}(A(1),A(1))$ is
\begin{dmath*} \left( (1+u)\left(1+u^2 + su(1+u^2+u^5) + s^2u^2(1+2u^5+u^7) + s^3u^3(1+u^4+u^5+u^6+u^9) + \frac{s^4u^4(1+u^4+u^5+u^9)}{1-su}\right) + u^6+su^2+su^8+s^2u^4\right)\frac{1}{1-s^4u^{12}}\end{dmath*}
where $s$ is the homological degree and $u$ is the topological degree.
\end{unnumberedtheorem}

Our computation of $HH_*(A(1),A(1))$ can be used as the input for other spectral sequences in order to make further computations. For example,
one could use it as input for the Connes spectral sequence, as in~9.8.6 of~\cite{MR1269324}, computing the cyclic homology $HC_*(A(1))$, or as input for 
the Pirashvili--Waldhausen spectral sequence, as in \cite{MR1181095}, computing the topological Hochschild
homology $THH_*(A(1))$. Those $THH$-groups are the input one would use to run the necessary homotopy fixed-point spectral sequences to compute the topological cyclic homology $TC_*(A(1))$,
which, using McCarthy's theorem (see~\cite{MR3013261}), 
gives the $2$-complete algebraic $K$-groups $K_*(A(1))^{\widehat{}}_{2}$. 
See \cite{MR1474979} for a survey of trace method computations of this kind.
Those computations are entirely outside the scope of the present paper.

We prefer to give much simpler applications: in \cref{The Hochschild cohomology...}, we use our Hochschild homology calculations together with a well-known duality argument to calculate the Hochschild cohomology groups $HH^*(A(1),A(1))$. Using the relationship between graded deformations of algebras and graded Hochschild cohomology from \cite{MR1383469}, we give some deformation-theoretic consequences in Corollary \ref{uniqueness cor}: for example, we find that there are precisely four isomorphism classes of first-order graded deformations of the $\mathbb{F}_2$-algebra $A(1)$.

We remark that the methods used in this paper also admit basically obvious extensions to methods for computing $HH_*(A(n),A(n))$ for arbitrary $n$, but one sees that for $n>1$,
carrying out such computations would be a daunting task. Our HH-May spectral sequence of Proposition~\ref{SSs and cocycle reps}
surjects on to the classical May spectral sequence computing $\Ext_{A(1)}^*(\mathbb{F}_2,\mathbb{F}_2)$, 
and for the same reasons, the $n>1$ analogue of our HH-May spectral sequence maps naturally to 
the classical May spectral sequence computing $\Ext_{A(n)}^*(\mathbb{F}_2,\mathbb{F}_2)$. We suspect that this map is still surjective for $n>1$, 
although we have made no attempt to verify this. Consequently the computation of $HH_*(A(n),A(n))$ using our methods is of at least the same
level of difficulty as the computation of $\Ext_{A(n)}^*(\mathbb{F}_2,\mathbb{F}_2)$. For $n=2$ this is already quite nontrivial.

There have been many cases of spectral sequence calculations of Hochschild (co) homology. We single out one particularly interesting precedent for this paper: in \cite{MR950556}, J.-L. Brylinski constructs a spectral sequence related to the present paper's abelianizing spectral sequence. Brylinski's spectral sequence computes the Hochschild homology of a noncommutative algebra over a field of characteristic zero, for the purposes of studying Poisson manifolds, and remarks that ``[e]xamples show that this spectral sequence tends to degenerate at $E^2$,'' in particular, that Brylinski's spectral sequence
collapses at the $E^2$-term for the algebra of differential operators on an algebraic or complex-analytic manifold. In the present paper's computation of $HH_*(A(1), A(1))$ we instead get collapse one term later, at $E_3$ rather than $E_2$. 

\begin{remark}
Given a field $k$ and a graded $k$-algebra $A$, there are {\em two} notions of the Hochschild homology $HH_*(A,A)$ which are in common circulation: 
\begin{itemize}
\item one can forget the grading, and simply consider the Hochschild homology of the underlying ungraded $k$-algebra of $A$.
This is the right thing to do in many applications of Hochschild homology in classical algebra; for example, if one wants to use trace methods to compute the algebraic $K$-groups of (the underlying ungraded algebra of) $A$. The computations in~\cite{MR972360} are an excellent example of this.
\item Alternatively, one can instead compute the ``graded algebra Hochschild homology,'' which incorporates a sign convention into the cyclic bar complex. 
This is the right thing to do in many applications of Hochschild homology in algebraic topology; for example, if one wants to use B\"{o}kstedt's spectral sequence to compute topological Hochschild homology of a ring spectrum. The computations in~\cite{MR1209233} are an excellent example of this.
\end{itemize}
Since $A(1)$ has characteristic $2$, sign conventions are irrelevant, and both notions of Hochschild homology coincide. This makes the computations in this paper equally applicable in classical algebra as in algebraic topology. Any future odd-primary analogues of these computations, however, would require that one carry out ``ungraded'' $HH_*$ computations separately from the ``graded'' $HH_*$ computations.
\end{remark}

We are grateful to an anonymous referee for helpful suggestions.

\section{Construction of May-type spectral sequences for Hochschild homology.}

The spectral sequence of Proposition \ref{may ss for hh} is classical, but in what follows, we will rely on specific structure and properties of this spectral sequence. We prefer to give a self-contained treatment of the spectral sequence here, to fix our conventions about indexing in the spectral sequence, and also so that the reader can easily see that our claims about the structure and properties of the spectral sequence are indeed true. 
In the statement of Theorem \ref{may ss for hh}, the bigrading subscripts 
$HH_{s,t}$ are as follows: $s$ is the usual homological
degree, while $t$ is the filtration degree, defined and computed as follows: 
given a homology class
$x\in HH_s(E^0A,E^0A)$, its filtration degree is the total degree (in the grading
on $E^0A$ induced by the filtration on $A$) of any homogeneous 
cycle representative for $x$ in the standard Hochschild chain complex.
\begin{prop}{\bf (May spectral sequence for Hochschild homology.)}\label{may ss for hh}
Let $k$ be a field, $A$ an algebra, and 
\begin{equation}\label{filt 0} A = F^0A \supseteq F^1A \supseteq F^2A \supseteq \dots \end{equation}
a filtration of $A$ which is multiplicative, that is,
if $x\in F^mA$ and $y\in F^nA$, then $xy\in F^{m+n}A$.
Then there exists a spectral sequence
\begin{align*} 
 E_1^{s,t} \cong HH_{s,t}(E_0A, E_0A) & \Rightarrow  HH_s(A,A) \\
 d_r^{s,t} : E_r^{s,t} & \rightarrow E_r^{s-1,t+r}.
 \end{align*}

This spectral sequence enjoys the following properties:
\begin{itemize}
\item 
If the filtration~\ref{filt 0} is finite, 
then the spectral sequence converges strongly.
\item
If $A$ is also a graded $k$-algebra and the filtration layers
$F^nA$ are graded sub-$k$-modules of $A$,
then the differential in the spectral sequence preserves the grading.
\item 
If $A$ is commutative, then so is $E_0A$, and the spectral sequence is one of algebras\footnote{That is, the differentials in the spectral sequence obey the graded Leibniz rule. The ring structures on the input $HH_*(E_0A, E_0A)$ and abutment $HH_*(A,A)$ are given by the usual shuffle product on the Hochschild homology of a commutative ring.}. Furthermore, the product in the spectral sequence converges to the product induced on the associated graded by the usual
shuffle product on $HH_*(A,A)$. 
\end{itemize}
\end{prop}
\begin{proof}
Let $CC_{\bullet}(A,A)$ denote the standard Hochschild chain complex of 
$A$, and let $F^nCC_{\bullet}(A,A)$ denote the sub-chain-complex of
$CC_{\bullet}(A,A)$ consisting of all chains of total filtration degree
$\leq n$. 
Our May spectral sequence is now simply the spectral sequence of the
filtered chain complex
\begin{equation}\label{filt 1} CC_{\bullet}(A,A) = F^0CC_{\bullet}(A,A) \supseteq F^1CC_{\bullet}(A,A)
 \supseteq F^2CC_{\bullet}(A,A)\supseteq \dots .\end{equation}
If $A$ is commutative, then filtration~\ref{filt 0} being multiplicative
implies that filtration~\ref{filt 1} is a multiplicative filtration
of the differential graded algebra $CC_{\bullet}(A,A)$, with product
given by the shuffle product. See \cite{MR1269324} for a textbook treatment of the shuffle product. It is standard (e.g. see \cite{MR0060829}) that the spectral sequence
of a multiplicatively-filtered DGA is multiplicative.

The product on the spectral sequence being given by the shuffle product is due to the naturality of the construction of $CC_{\bullet}(A,A)$ in the choice of $k$-algebra $A$: if $A$ is commutative, then the multiplication map $A\otimes_k A \rightarrow A$ is a morphism of $k$-algebras, hence we get a map of chain complexes
\[ CC_{\bullet}(A\otimes_k A, A\otimes_k A) \rightarrow CC_{\bullet}(A,A),\] which we compose with the Eilenberg-Zilber (i.e., ``shuffle'') isomorphism
\[ CC_{\bullet}(A,A) \otimes_k CC_{\bullet}(A,A) \stackrel{\cong}{\longrightarrow} CC_{\bullet}(A\otimes_k A, A\otimes_k A) .\]
\end{proof}

\begin{remark}\label{how to compute differentials}
The differential in the spectral sequence of Proposition \ref{may ss for hh} is (like any other spectral sequence
of a filtered chain complex) computed on a class $x\in HH_{*,*}(E_0A,E_0A)$
by computing a homogeneous cycle representative $y$ for $x$
in the standard Hochschild chain complex for $E_0A$, lifting
$y$ to a homogeneous chain $\widetilde{y}$ in the standard Hochschild
chain complex for $A$, applying the Hochschild differential $d$ to
$\widetilde{y}$, then taking the image of $d\widetilde{y}$ in 
the standard Hochschild chain complex for $E_0A$.
\end{remark}

The following construction is classical: as far as the author knows, it was first introduced in \cite{cartiercohomologie} (see also section 3.1 of \cite{MR597479}):
\begin{definition}\label{def of cyclic cobar}
Let $k$ be a field and $C$ a coalgebra over $k$ with comultiplication map $\Delta: C \rightarrow C\otimes_k C$. By the {\em Cartier cobar construction on $C$} we mean the cosimplicial $k$-vector space
\[ \xymatrix{ 
C \ar@<1ex>[r] \ar@<-1ex>[r] & 
 C\otimes_k C  \ar[l]\ar@<2ex>[r]\ar[r]\ar@<-2ex>[r] & 
 C\otimes_k C\otimes_k C  \ar@<1ex>[l] \ar@<-1ex>[l] \ar@<3ex>[r] \ar@<1ex>[r] \ar@<-1ex>[r] \ar@<-3ex>[r] & \dots \ar@<2ex>[l]\ar[l]\ar@<-2ex>[l]  }\]
with coface maps $d^0, d^1,\dots ,d^n: C^{\otimes_k n} \rightarrow C^{\otimes_k n+1}$ given by
\begin{align*} d^i(a_0\otimes \dots \otimes a_{n-1}) &= 
  \left\{ \begin{array}{l} a_0 \otimes \dots \otimes a_{i-1}\otimes \Delta(a_i)\otimes a_{i+1} \otimes \dots \otimes a_{n-1} \\
                             \mbox{if\ } i < n,\\
                            \tau\left( \Delta(a_0) \otimes a_1 \otimes \dots \otimes a_{n-1}\right) \\
                             \mbox{if\ } i = n,\end{array}\right. \end{align*}
where $\tau$ is the cyclic permutation toward the left, i.e., 
\[\tau( a_0 \otimes \dots \otimes a_n) = a_1 \otimes \dots \otimes a_n\otimes a_0.\]
The codegeneracy maps are constructed from the counit (augmentation) map on $C$ in the usual way.

By the {\em Cartier cochain complex of $C$,} denoted $CartC^{\bullet}(C,C)$, we mean the
alternating sum cochain complex of the Cartier cobar construction on $C$.
We will write $CartH^*(C,C)$ for its cohomology, which we call {\em Cartier cohomology.}
\end{definition}
\begin{remark}\label{explicit coproduct remark}
Clearly, if $A$ is a finite-dimensional co-commutative Hopf algebra over a field $k$, and if $C$ is the $k$-linear dual of $A$, then the Cartier cochain complex on $C$ is isomorphic to the $k$-linear dual of the Hochschild chain complex on $A$, and the Cartier cochain complex then inherits a product from the coproduct on the Hochschild chain complex of $A$. 

It is worthwhile to be very explicit about the coproduct on the Hochschild chain complex of a co-commutative Hopf algebra, since this point is not as well documented in the literature as it could be (although it does appear in some places, for example, in \cite{MR3519050}). A Hochschild $n$-chain is an element of $A\otimes_k A\otimes_k \dots \otimes_k A$, with $n+1$ tensor factors of $A$; because the tensor factor on the far left plays a special role, we will adopt the common notation $a_0\left[ a_1 \otimes \dots \otimes a_n\right]$ instead of $a_0\otimes a_1\otimes \dots\otimes a_n$ for a (term in a) Hochschild $n$-chain.

Then the coproduct on the Hochschild chain complex is given by
\[ \Delta\left( a_0\left[ a_1 \otimes \dots \otimes a_n\right]\right) 
  = \sum_{i=1}^{\ell} \sum_{j=0}^n a_{0,i}^{\prime} \left[ a_1\otimes \dots \otimes a_j\right] \otimes a_{0,i}^{\prime\prime} \left[ a_{j+1}\otimes \dots \otimes a_n\right] \]
where $\ell$ and $\{ a_{0,i}^{\prime},a_{0,i}^{\prime\prime}\}_{i=1, \dots ,\ell}$ are given by the determined by the coproduct in $A$ and the formula
\[ \Delta(a_0) = \sum_{i=0}^{\ell} a_{0,i}^{\prime}\otimes a_{0,i}^{\prime\prime}.\]
\end{remark}

It is not difficult to see how to adapt Definition \ref{def of cyclic cobar} to yield the Cartier cochain complex with coefficients in an $C$-bicomodule $M$. An explicit account is given in \cite{MR597479}. This of course also yields a definition of $CartH^*(C,M)$. 

Proposition \ref{duality between hh and cohh} is straightforward and certainly not new:
\begin{prop}\label{duality between hh and cohh}
Let $k$ be a field and let $A$ be a $k$-algebra which is finite-dimensional as a $k$-vector space.
Let $A^*$ denote the $k$-linear dual coalgebra of $A$.
Then, for each $n\in\mathbb{N}$, the $n$th Hochschild homology $k$-vector space of $A$ and the $n$th Cartier cohomology $k$-vector space
of $A^*$ are mutually $k$-linearly dual. That is, we have isomorphisms of $k$-vector spaces:
\begin{align}
\label{iso 6} \hom_k( HH_n(A,A), k) &\cong CartH^n(A^*,A^*), \\
\label{iso 7} \hom_k( CartH^n(A^*,A^*), k) &\cong HH_n(A,A),\end{align}
as well as isomorphisms
\begin{align}
\label{iso 8} \hom_k( HH_n(A,k), k) &\cong CartH^n(A^*,k), \\
\label{iso 9} \hom_k( CartH^n(A^*,k), k) &\cong HH_n(A,k).\end{align}
\end{prop}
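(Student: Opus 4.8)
The plan is to identify the cyclic cobar complex $coCH^{\bullet}(A^*,A^*)$ with the $k$-linear dual cochain complex $\hom_k(CH_{\bullet}(A,A),k)$ of the standard Hochschild chain complex --- degreewise and differential-by-differential --- and then to deduce all four isomorphisms from the exactness of $\hom_k(-,k)$ over the field $k$ together with finite-dimensionality. The degreewise identification is immediate: because $A$ is finite-dimensional, the canonical map $(A^*)^{\otimes_k n+1}\to(A^{\otimes_k n+1})^*$ is an isomorphism, so the cobar cochain group $coCH^n(A^*,A^*)=(A^*)^{\otimes_k n+1}$ is canonically $\hom_k(CH_n(A,A),k)$, since $CH_n(A,A)=A^{\otimes_k n+1}$.

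The main content is to verify that, under this identification, each coface map of Definition~\ref{def of cyclic cobar} is exactly the $k$-linear dual of the corresponding face map of the cyclic bar construction, so that the two alternating-sign differentials are mutually dual. There are two cases. For $0\le i<n$ the face map $d_i$ is $\mathrm{id}^{\otimes i}\otimes m\otimes \mathrm{id}^{\otimes n-1-i}$, where $m$ is the multiplication of $A$; since the comultiplication of $A^*$ is by definition $m^*$, the dual of $d_i$ is $\mathrm{id}^{\otimes i}\otimes \Delta\otimes \mathrm{id}^{\otimes n-1-i}$, which is precisely the coface $d^i$ comultiplying the $i$th tensor factor. For the wraparound face $d_n$, which sends $a_0\otimes\dots\otimes a_n$ to $a_na_0\otimes a_1\otimes\dots\otimes a_{n-1}$, dualizing the single occurrence of $m$ splits the first cofactor by $\Delta$ and records which of the two resulting cofactors pairs with the first tensor slot and which with the last; this is exactly the effect of $\tau\circ(\Delta\otimes\mathrm{id}^{\otimes n-1})$ appearing in $d^n$. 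Passing to alternating sums then gives an equality of cochain complexes $coCH^{\bullet}(A^*,A^*)=\hom_k(CH_{\bullet}(A,A),k)$.

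With this equality in hand the conclusion is formal. Since $k$ is a field, $\hom_k(-,k)$ is exact and hence commutes with (co)homology, so $coHH^n(A^*,A^*)=H^n(\hom_k(CH_{\bullet}(A,A),k))\cong\hom_k(H_n(CH_{\bullet}(A,A)),k)=\hom_k(HH_n(A,A),k)$, which is the isomorphism~\eqref{iso 6}; applying $\hom_k(-,k)$ once more and using that each $HH_n(A,A)$ is finite-dimensional to collapse the double dual yields~\eqref{iso 7}. The coefficient-in-$k$ statements~\eqref{iso 8} and~\eqref{iso 9} follow by the identical argument applied to Definition~\ref{def of cyclic cobar with supertrivial coeffs}, the only new point being that the two outer cofaces $d^0$ and $d^n$ are assembled from the copointing $\eta\colon k\to A^*$, which is the $k$-linear dual of the augmentation on $A$ giving $k$ its $A$-bimodule structure in $CH_{\bullet}(A,k)$; thus the same degreewise duality of complexes holds there.

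I expect the only genuine subtlety to be the bookkeeping in the wraparound face: one must check that dualizing ``multiply the last tensor factor into the first'' produces not merely a comultiplication, but the correct placement of the two outputs of $\Delta$ in the first and last tensor slots, so that the cyclic permutation $\tau$ appears in $d^n$ with exactly the orientation given in Definition~\ref{def of cyclic cobar}. (Note that cocommutativity of $A^*$ is not needed for this duality of complexes; it enters only later, when one puts a product on $coHH^*$.) Everything else is routine: the inner faces dualize termwise, and the signs match because both objects are the alternating-sign complexes of the respective (co)simplicial vector spaces.
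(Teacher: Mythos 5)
Your proof is correct and follows essentially the same route as the paper's: identify $coCH^{\bullet}(A^*,A^*)$ with $\hom_k(CH_{\bullet}(A,A),k)$, apply the universal coefficient theorem over the field $k$, and use finite-dimensionality to collapse the double dual. The only difference is that you spell out the duality of the face and coface maps (including the wraparound face), which the paper dismisses as holding ``by construction.''
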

\begin{proof}
By construction, the Cartier cobar construction is simply the $k$-linear dual of the usual Hochschild bar construction, so isomorphisms \eqref{iso 6} through \eqref{iso 9} simply follow from the universal coefficient theorem.
\end{proof}

In Proposition \ref{may ss for dual hh}, the bigrading superscripts $CartH^{s,t}$ are as follows: $s$ is the usual cohomological degree, while $t$ is the filtration degree, defined and computed as follows using cocycle representatives in the same way as the filtration degree in the spectral sequence of Proposition \ref{may ss for hh}.
\begin{prop}{\bf (May spectral sequence for dual Hochschild cohomology.)}\label{may ss for dual hh}
Let $k$ be a field and let $C$ be a $k$-coalgebra.
Let 
\begin{equation}\label{filt 0 dual} F_0C \subseteq F_1C \subseteq F_2C \subseteq \dots \end{equation}
be a filtration of $C$ which is comultiplicative, that is,
if $x\in F_mC$, then $\Delta(x)\in \sum_{n=0}^m F_{n}C\otimes F_{m-n}C$.
Then there exists a spectral sequence
\begin{align*} 
 E_1^{s,t} \cong CartH^{s,t}(E^0C, E^0C) & \Rightarrow  CartH^s(C,C) \\
 d_r^{s,t} : E_r^{s,t} & \rightarrow E_r^{s+1,t-r}.
 \end{align*}

This spectral sequence enjoys the following additional properties:
\begin{enumerate}
\item \label{strong convergence property}
If the filtration~\ref{filt 0 dual} is finite, 
then the spectral sequence converges strongly.
\item \label{grading property}
If $C$ is also a graded cocommutative $k$-coalgebra and the filtration layers
$F_nC$ are graded $k$-linear subspaces,
then the differential preserves the grading.
\item 
If $C$ is the underlying coalgebra of the $k$-linear dual Hopf algebra of a commutative Hopf algebra $A$ over $k$,
and the filtration \ref{filt 0 dual} is a filtration by Hopf ideals, then $E^0C$ is is also a cocommutative Hopf algebra,
and the $E_1$-term and the abutment of the spectral sequence each have a natural ring structure. The spectral sequence is furthermore one of algebras, and the product in the spectral sequence converges
to the product on the abutment. 
\end{enumerate}
\end{prop}
\begin{proof}
This is formally dual to Proposition \ref{may ss for hh}. The only thing that needs some explanation is the ring structure.
The underlying filtered DGA of this spectral sequence has a
ring structure given by the composite
\begin{align*} 
 CartC^{\bullet}(C,C) \otimes_{k} CartC^{\bullet}(C,C) &\stackrel{\cong}{\longrightarrow} \left( CC_{\bullet}(A,A)\right)^{*} \otimes_{k} \left(CC_{\bullet}(A,A)\right)^{*} \\
 & \stackrel{\cong}{\longrightarrow} \left( CC_{\bullet}(A,A) \otimes_{k} CC_{\bullet}(A,A)\right)^{*} \\
 & \stackrel{\simeq}{\longrightarrow} \left( CC_{\bullet}(A\otimes_{k} A,A\otimes_k A)\right)^{*} \\
 & \stackrel{\Delta^*}{\longrightarrow}  CC_{\bullet}(A,A)^{*} \\
 & \stackrel{\cong}{\longrightarrow} CartC^{\bullet}(C,C) ,
\end{align*}
where the map marked $\simeq$ is the $k$-linear dual of the Alexander-Whitney map, 
the map marked $\Delta^*$ is the $k$-linear dual of $CC_{\bullet}$ applied to the comultiplication map on $A$
(which is well-defined, since $CC_{\bullet}$ is functorial on $k$-algebra maps and since $A$ is assumed cocommutative, so that its
comultiplication is a $k$-algebra morphism). 
The rest is formal.
\end{proof}

\begin{prop}{\bf (May spectral sequence for dual Hochschild cohomology, with coefficients in the base field.)}\label{may ss for dual hh, supertrivial coeffs}
Let $k$ be a field and let $C$ be a $k$-coalgebra.
Suppose $C$ is equipped with a comultiplicative filtration as in \eqref{filt 0 dual}.
Then there exists a spectral sequence
\begin{align*} 
 E_1^{s,t} \cong CartH^{s,t}(E^0C, k) & \Rightarrow  CartH^s(C,k) \\
 d_r^{s,t} : E_r^{s,t} & \rightarrow E_r^{s+1,t-r}.
 \end{align*}
The bigrading superscripts
$CartH^{s,t}$ are as in Proposition~\ref{may ss for dual hh}.

This spectral sequence enjoys properties~\ref{strong convergence property} and \ref{grading property} from Proposition~\ref{may ss for dual hh}.
\end{prop}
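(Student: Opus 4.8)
The plan is to realize the desired spectral sequence as the spectral sequence of a filtered cochain complex, filtering the cyclic cobar complex with coefficients in $k$ by total filtration degree, exactly as in the proofs of Proposition~\ref{may ss for hh} and Proposition~\ref{may ss for dual hh}. Concretely, I would let $F_m coCH^{\bullet}(A,k)$ denote the subcomplex of $coCH^{\bullet}(A,k)$ spanned by cochains of total filtration degree $\leq m$, where the filtration degree of a tensor $a_0 \otimes \dots \otimes a_{n-1}$ is the sum of the degrees of its factors with respect to the filtration~\ref{filt 0 dual} on $A$. Since that filtration is increasing and exhaustive, so is the induced filtration on $coCH^{\bullet}(A,k)$, and the spectral sequence of this filtered cochain complex is the one asserted in the statement.

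The only thing requiring verification is that the cyclic cobar differential respects this filtration, i.e., that each coface map of Definition~\ref{def of cyclic cobar with supertrivial coeffs} preserves total filtration degree. For the middle coface maps $d^i$ with $0 < i < n$, which apply $\Delta$ in a single slot, this is immediate from comultiplicativity of the filtration: $\Delta(F_m A) \subseteq \sum_{p=0}^m F_p A \otimes F_{m-p} A$, so applying $\Delta$ to a factor of degree $\leq m$ produces terms of total degree $\leq m$ in that pair of slots, verbatim as in Proposition~\ref{may ss for dual hh}. The genuinely new point, relative to the coefficients-in-$A$ case, is the behavior of the outer coface maps $d^0$ and $d^n$, which insert the copointing $\eta$ via $\tilde{\eta}_R$ and $\tilde{\eta}_L$ (the cyclic permutation $\tau$ in $d^n$ clearly preserves total degree). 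These preserve filtration degree precisely because $\eta(1) \in F_0 A$, which holds for any copointed coalgebra whose filtration begins with its coaugmentation, and in particular for the filtrations used later in this paper. I expect this to be the main---indeed essentially the only---obstacle, and it is a one-line check rather than a real difficulty.

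Once the filtration is known to be by subcomplexes, the rest is formal and transfers without change from the earlier propositions. The associated graded of $F_\bullet coCH^{\bullet}(A,k)$ is, by inspection, the cyclic cobar complex of $E^0A$ with coefficients in $k$, so the spectral sequence has $E_1^{s,t} \cong coHH^{s,t}(E^0A, k)$, with May degree $t$ defined exactly as in Proposition~\ref{may ss for dual hh} and differentials $d_r^{s,t}: E_r^{s,t} \to E_r^{s+1,t-r}$. Strong convergence under a finite filtration (property~\ref{strong convergence property}), the grading statement when the coideals $F_nA$ are generated by homogeneous elements (property~\ref{grading property}), and the cocycle-lifting description of the differentials (property~\ref{computability property}) then all follow by the identical arguments given for Proposition~\ref{may ss for dual hh}, which are themselves formally dual to those of Proposition~\ref{may ss for hh}; none of those arguments used the choice of coefficients in any essential way, so they apply equally well here. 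I would explicitly refrain from asserting any multiplicative structure: the coefficients-in-$k$ cobar complex carries no analogue of the coproduct-induced product appearing in Proposition~\ref{may ss for dual hh}, which is exactly why property~(3) of that proposition is absent from the present statement.
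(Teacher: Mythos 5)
Your proposal is correct and follows essentially the same route as the paper, which simply declares the proof ``essentially identical'' to Proposition~\ref{may ss for dual hh} (itself the spectral sequence of the filtered cyclic cobar complex); you have merely made explicit the one genuinely new check, namely that the outer coface maps involving the copointing $\eta$ preserve the filtration, and correctly observed that no multiplicative structure is being claimed. No gaps.
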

\begin{proof}
Essentially identical to Proposition~\ref{may ss for dual hh}.
\end{proof}
Differentials in the spectral sequences of Propositions \ref{may ss for dual hh} and \ref{may ss for dual hh, supertrivial coeffs} are computable, at least in principle, by the same method described in Remark \ref{how to compute differentials}. We use that method below, in Propositions \ref{d1 diffs}, \ref{d2 differential computation}, and \ref{no more diffs}.

\section{The May and abelianizing filtrations.}

We aim to compute $HH_*(A(1), A(1))$, the Hochschild homology of $A(1)$. By Proposition~\ref{duality between hh and cohh}, this amounts to
computing $CartH^*(A(1)^*,A(1)^*)$, and then taking the $\mathbb{F}_2$-linear dual. We now go about doing this.

\begin{definition}
Recall that the {\em May filtration} on $A(1)$ is the filtration
by powers of the augmentation ideal $I = (\Sq^1,\Sq^2)$. We write ${}^{Aug}F^n(A(1))$ for the $n$th filtration layer in this filtration, i.e., 
${}^{Aug}F^nA(1) = I^n$, and we write ${}^{Aug}{E}_0A(1)$ for the
associated graded $\mathbb{F}_2$-algebra. 
If $x\in A(1)$, we sometimes write $\widetilde{x}$ for the associated element in ${}^{Aug}{E}_0A(1)$.
\end{definition}

\begin{prop}\label{assoc graded of may filt}
The $\mathbb{F}_2$-algebra ${}^{Aug}{E}_0A(1)$ is the graded $\mathbb{F}_2$-algebra with generators $\widetilde{\Sq}^1$ and $\widetilde{\Sq}^2$ in grading degrees $1$ and $2$, respectively,
and relations
\[ 0= \widetilde{\Sq}^1\widetilde{\Sq}^1= \widetilde{\Sq}^2\widetilde{\Sq}^2 = 
 \widetilde{\Sq}^1\widetilde{\Sq}^2\widetilde{\Sq}^1\widetilde{\Sq}^2 + \widetilde{\Sq}^2\widetilde{\Sq}^1\widetilde{\Sq}^2\widetilde{\Sq}^1.\]

The $\mathbb{F}_2$-linear duals of $A(1)$ and ${}^{Aug}{E}_0(A(1))$ are, as
Hopf algebras,
\begin{align*} 
A(1)^* &= \mathbb{F}_2[\overline{\xi}_1, \overline{\xi}_2]/\left(\overline{\xi}_1^4, \overline{\xi}_2^2\right), \\
\Delta(\overline{\xi}_2) &= \overline{\xi}_2\otimes 1 + \overline{\xi}_1 \otimes \overline{\xi}_1^2 + 1\otimes \overline{\xi}_2, \\
\left({}^{Aug}{E}_0A(1)\right)^* &= \mathbb{F}_2[\overline{\xi}_{1,0}, \overline{\xi}_{1,1},\overline{\xi}_{2,0}]/\left(\overline{\xi}_{1,0}^2, \overline{\xi}_{1,1}^2, \overline{\xi}_{2,0}^2\right), \\
\Delta(\overline{\xi}_{2,0}) &= \overline{\xi}_{2,0}\otimes 1 + \overline{\xi}_{1,0} \otimes \overline{\xi}_{1,1} + 1\otimes \overline{\xi}_{2,0}, \\
\end{align*}
with $\overline{\xi}_1,\overline{\xi}_{1,0},\overline{\xi}_{1,1}$
all primitive\footnote{The notation $\overline{x}$ is traditional for the conjugate of
$x$ in a Hopf algebra. In $A(1)_*$, we have $\overline{\xi}_1 = \xi_1$ 
and $\overline{\xi}_2 = \xi_2 + \xi_1^3$. Another notational point: the symbol $\overline{\xi}_{i,j}$ is used to denote the image of $\overline{\xi}_i^{2^j}\in A(1)$ in ${}^{Aug}{E}_0(A(1))$.}.
\end{prop}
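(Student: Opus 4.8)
The plan is to establish the three assertions in order, deriving the last from the first by $\mathbb{F}_2$-linear duality. Throughout I would use the classical presentation of $A(1)$ as the quotient of the free associative $\mathbb{F}_2$-algebra on $\Sq^1,\Sq^2$ by the Adem relations $\Sq^1\Sq^1 = 0$ and $\Sq^2\Sq^2 = \Sq^1\Sq^2\Sq^1$, together with the fact (recalled in the introduction) that $A(1)$ is $8$-dimensional, concentrated in topological degrees $0$ through $6$ with Poincar\'{e} series $1+t+t^2+2t^3+t^4+t^5+t^6$.

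For the associated graded $\dot{E}_0A(1)$ I would first note that, since the May filtration is the augmentation-ideal (word-length) filtration and $A(1)$ is generated by $\Sq^1,\Sq^2\in I\setminus I^2$, the algebra $\dot{E}_0A(1)$ is generated by the leading terms $\dot{\Sq}^1,\dot{\Sq}^2$. I then read off the relations by comparing the filtration degrees of the two sides of each Adem relation. The relation $\Sq^1\Sq^1 = 0$ is filtration-homogeneous of degree $2$ and yields $\dot{\Sq}^1\dot{\Sq}^1 = 0$ at once. \textbf{The crux is the relation $\Sq^2\Sq^2 = \Sq^1\Sq^2\Sq^1$:} its left side is a priori of filtration degree $2$ while its right side is of filtration degree $3$, so the equation exhibits $\Sq^2\Sq^2$ as an element of $I^3$, whence its image in $I^2/I^3$ vanishes and $\dot{\Sq}^2\dot{\Sq}^2 = 0$ in $\dot{E}_0A(1)$ --- emphatically \emph{not} the filtration-inhomogeneous ``$\dot{\Sq}^2\dot{\Sq}^2 = \dot{\Sq}^1\dot{\Sq}^2\dot{\Sq}^1$'' that a careless reading would suggest. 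Finally, the two length-four words $\Sq^1\Sq^2\Sq^1\Sq^2$ and $\Sq^2\Sq^1\Sq^2\Sq^1$ both represent the single top class in topological degree $6$ (one checks both equal $\Sq^2\Sq^2\Sq^2$, using $\Sq^2\Sq^2 = \Sq^1\Sq^2\Sq^1$ repeatedly); as they coincide in $A(1)$ and both lie in filtration degree $4$, their common leading term in $I^4/I^5$ gives the relation $\dot{\Sq}^1\dot{\Sq}^2\dot{\Sq}^1\dot{\Sq}^2 + \dot{\Sq}^2\dot{\Sq}^1\dot{\Sq}^2\dot{\Sq}^1 = 0$.

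To see that these three relations present $\dot{E}_0A(1)$ completely, I would let $B$ be the free $\mathbb{F}_2$-algebra on two generators in degrees $1,2$ modulo exactly these relations; since they hold in $\dot{E}_0A(1)$ there is a surjection $B\twoheadrightarrow\dot{E}_0A(1)$. A short rewriting argument then shows $B$ is spanned by the eight alternating words $1,\dot{\Sq}^1,\dot{\Sq}^2,\dot{\Sq}^1\dot{\Sq}^2,\dot{\Sq}^2\dot{\Sq}^1,\dot{\Sq}^1\dot{\Sq}^2\dot{\Sq}^1,\dot{\Sq}^2\dot{\Sq}^1\dot{\Sq}^2,\dot{\Sq}^1\dot{\Sq}^2\dot{\Sq}^1\dot{\Sq}^2$: the squaring relations kill every non-alternating word, the top relation identifies the two length-four alternating words, and combining the top relation with the squaring relations shows every alternating word of length $\geq 5$ vanishes. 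Hence $\dim_{\mathbb{F}_2}B \leq 8$; since $\dot{E}_0A(1)$ is the associated graded of a finite filtration it has the same dimension as $A(1)$, namely $8$, so the surjection is an isomorphism. This simultaneously confirms $\Sq^1\Sq^2\Sq^1 = \Sq^2\Sq^2 \in I^3\setminus I^4$ and the top class in $I^4\setminus I^5$, so the leading terms named above are genuinely nonzero.

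The structure of $A(1)^*$ is classical: dualizing the inclusion $A(1)\hookrightarrow A$ and invoking Milnor's computation $A^* = \mathbb{F}_2[\xi_1,\xi_2,\dots]$ exhibits $A(1)^*$ as the quotient Hopf algebra $\mathbb{F}_2[\xi_1,\xi_2]/(\xi_1^4,\xi_2^2)$, which I would rewrite in the conjugate generators $\overline{\xi}_1 = \xi_1$, $\overline{\xi}_2 = \xi_2 + \xi_1^3$ via the standard conjugate-coproduct formula to get the stated $\Delta(\overline{\xi}_2)$. Finally, $\left(\dot{E}_0A(1)\right)^*$ I would obtain by dualizing the first computation: it is the graded dual of the cocommutative Hopf algebra $\dot{E}_0A(1)$ (cocommutative because the powers $I^n$ form a comultiplicative filtration by Hopf ideals), hence commutative, and it is canonically the associated graded of $A(1)^*$ for the dual increasing filtration $G_n = \mathrm{ann}(I^{n+1})$. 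Passing to leading terms sends $\overline{\xi}_1\mapsto\overline{\xi}_{1,0}$, $\overline{\xi}_1^2\mapsto\overline{\xi}_{1,1}$, $\overline{\xi}_2\mapsto\overline{\xi}_{2,0}$, with $\overline{\xi}_{1,0},\overline{\xi}_{1,1}$ primitive (primitivity inherited from $\overline{\xi}_1$ and $\overline{\xi}_1^2$); reducing the relations $\overline{\xi}_1^4 = 0$ and $\overline{\xi}_2^2 = 0$ to leading terms gives $\overline{\xi}_{1,1}^2 = \overline{\xi}_{2,0}^2 = 0$, and reducing $\Delta(\overline{\xi}_2)$ gives the stated $\Delta(\overline{\xi}_{2,0})$. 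The one genuinely subtle point --- exactly dual to $\dot{\Sq}^2\dot{\Sq}^2 = 0$ above --- is the relation $\overline{\xi}_{1,0}^2 = 0$: here $\overline{\xi}_1^2$ is \emph{nonzero} in $A(1)^*$, but (because $\Sq^1\Sq^1 = 0$ leaves no filtration-$2$ element in topological degree $2$) it already lies in filtration degree $1$, so its class in the weight-$2$ graded piece where $\overline{\xi}_{1,0}^2$ lives is zero, while its genuine leading term $\overline{\xi}_{1,1}$ splits off as an independent weight-$1$ generator. A final count, $\dim_{\mathbb{F}_2}\mathbb{F}_2[\overline{\xi}_{1,0},\overline{\xi}_{1,1},\overline{\xi}_{2,0}]/(\overline{\xi}_{1,0}^2,\overline{\xi}_{1,1}^2,\overline{\xi}_{2,0}^2) = 2^3 = 8 = \dim_{\mathbb{F}_2}A(1)$, confirms there are no further generators or relations.
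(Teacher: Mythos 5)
Your argument is correct, but it is worth knowing that the paper offers essentially no proof here: the entire justification given is the single sentence ``Well-known consequence of the computation of the dual Steenrod algebra, as in Milnor,'' so your write-up is a from-scratch verification rather than a parallel to anything in the text. What you supply is exactly the content that citation is standing in for, and you put your finger on the one genuinely delicate point: the Adem relation $\Sq^2\Sq^2=\Sq^1\Sq^2\Sq^1$ is not filtration-homogeneous, so it degenerates to $\dot{\Sq}^2\dot{\Sq}^2=0$ in $I^2/I^3$ while its right-hand side survives as a nonzero class in $I^3/I^4$; and dually $\overline{\xi}_1^2$, though nonzero in $A(1)^*$, already lies in $\mathrm{ann}(I^2)$ and so splits off as the independent weight-one generator $\overline{\xi}_{1,1}$ while $\overline{\xi}_{1,0}^2=0$. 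Your rewriting argument bounding the presented algebra $B$ by the eight alternating words, combined with $\dim\dot{E}_0A(1)=\dim A(1)=8$, is a clean way to see the presentation is complete. The only place you are slightly quick is the very last step: the dimension count $2^3=8$ shows the map from $\mathbb{F}_2[\overline{\xi}_{1,0},\overline{\xi}_{1,1},\overline{\xi}_{2,0}]/(\text{squares})$ to $(\dot{E}_0A(1))^*$ is an isomorphism only once you know it is surjective, i.e.\ that these three classes generate the dual algebra; that requires either dualizing the coalgebra structure of $\dot{E}_0A(1)$ (its diagonal on the eight-element monomial basis) or invoking the Borel structure theory for the commutative Hopf algebra $(\dot{E}_0A(1))^*$, and a sentence to that effect would close the argument. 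This is a small gloss, not a gap in the method.
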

\begin{proof}
Well-known consequence of the computation of the dual Steenrod algebra, as in~\cite{MR0099653}.
\end{proof}

We do not know a reference for Proposition \ref{a(1) and dihedral group}, but we doubt that it is a new observation.
\begin{prop}\label{a(1) and dihedral group}
The $\mathbb{F}_2$-algebra ${}^{Aug}{E}_0A(1)$ is isomorphic to the group ring
$\mathbb{F}_2[D_8]$ of the dihedral group $D_8$.
\end{prop}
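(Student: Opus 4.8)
The plan is to exhibit an explicit isomorphism of $\mathbb{F}_2$-algebras, using the presentation of $\dot{E}_0A(1)$ recorded in Proposition~\ref{assoc graded of may filt} together with the two-reflection presentation $D_8 = \langle a,b \mid a^2 = b^2 = 1,\ (ab)^4 = 1\rangle$ of the dihedral group. Since $A(1)$ is $8$-dimensional over $\mathbb{F}_2$, so is its associated graded $\dot{E}_0A(1)$, and $\mathbb{F}_2[D_8]$ is likewise $8$-dimensional; hence it will suffice to produce a surjective (equivalently injective) $\mathbb{F}_2$-algebra homomorphism between them. Note at the outset that the isomorphism cannot respect the grading, since the elements $1+a$ and $1+b$ appearing below are inhomogeneous, so I work with the underlying ungraded algebras throughout.

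First I would define a homomorphism $\phi : \dot{E}_0A(1) \to \mathbb{F}_2[D_8]$ on generators by $\dot{\Sq}^1 \mapsto 1+a$ and $\dot{\Sq}^2 \mapsto 1+b$, and check that it respects the three defining relations. The two quadratic relations are immediate: in characteristic $2$, $(1+a)^2 = 1 + a^2 = 1 + 1 = 0$, and likewise $(1+b)^2 = 0$, since $a,b$ are involutions. The only substantive point is the length-four relation $\dot{\Sq}^1\dot{\Sq}^2\dot{\Sq}^1\dot{\Sq}^2 + \dot{\Sq}^2\dot{\Sq}^1\dot{\Sq}^2\dot{\Sq}^1 = 0$, which under $\phi$ becomes the assertion $(1+a)(1+b)(1+a)(1+b) = (1+b)(1+a)(1+b)(1+a)$ in $\mathbb{F}_2[D_8]$. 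I would verify this by expanding both sides: using only $a^2 = b^2 = 1$, each product collapses to the ``norm element'' $\sum_{g \in D_8} g = 1 + a + b + ab + ba + aba + bab + abab$, and this expression is manifestly symmetric under $a \leftrightarrow b$ (here one uses $abab = baba$, equivalent to $(ab)^4 = 1$, to match the central term). Thus both products agree and $\phi$ is well-defined. I expect this computation to be the main point of the argument: the real content is the observation that the associated-graded quartic relation is precisely the translation of the group relation $(ab)^4 = 1$, and dually that $1+g$ is square-zero exactly when $g$ is an involution in characteristic $2$.

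Finally I would check surjectivity: the image of $\phi$ contains $1+a$ and $1+b$, hence $a$ and $b$, and since $a,b$ generate $D_8$ as a group, every group element is a product of $a$'s and $b$'s and so lies in the image; therefore $\phi$ is onto. Comparing dimensions, $\dim_{\mathbb{F}_2}\dot{E}_0A(1) = 8 = \dim_{\mathbb{F}_2}\mathbb{F}_2[D_8]$, then forces $\phi$ to be an isomorphism. The only genuine obstacle is the verification of the length-four relation, which is a finite and purely mechanical group-algebra computation; everything else is formal, resting on the dimension count and the involution/square-zero correspondence in characteristic $2$.
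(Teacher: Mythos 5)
Your proposal is correct and follows essentially the same route as the paper: both send $\dot{\Sq}^1,\dot{\Sq}^2$ to (or from) $1+a,1+b$ and reduce the whole matter to checking that the length-four relation matches $abab=baba$ in $\mathbb{F}_2[D_8]$. The only cosmetic difference is that the paper writes down the inverse homomorphism explicitly, whereas you conclude bijectivity from surjectivity plus the $8$-dimensional count; both are fine.
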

\begin{proof}
We use the presentation
\[ D_8 = \langle a,b \mid a^2, b^2, abab = baba\rangle \]
for $D_8$.
The $\mathbb{F}_2$-algebra map
\[ f: \mathbb{F}_2[D_8] \rightarrow {}^{Aug}{E}_0A(1)\]
given by
\begin{align*} 
 f(a) &= 1 + \widetilde{\Sq}^1 \\
 f(b) &= 1 + \widetilde{\Sq}^2 \end{align*}
is well-defined\footnote{Here it is essential that we are using
${}^{Aug}{E}_0A(1)$ and not $A(1)$, since $\left(\widetilde{\Sq}^2\right)^2 = 0$
in ${}^{Aug}{E}_0A(1)$ but $(\Sq^2)^2 \neq 0$ in $A(1)$.}, since $f(a)^2 = 1 = f(b)^2$ and
$f(a)f(b)f(a)f(b) = f(b)f(a)f(b)f(a)$. 
The algebra map $f$ is surjective since it hits the generators of ${}^{Aug}{E}_0A(1)$, and its domain and codomain are finite-dimensional of the same dimension, hence $f$ must be an isomorphism.
\end{proof}

We now use the well-known computation of the Hochschild homology of group rings. See \cite{MR0814144} for this result when $k$ has characteristic zero, and Theorem 7.4.6 of \cite{MR1600246} for the general case, which is as follows:
\begin{theorem}  \label{burghelea thm}
 Suppose $G$ is a discrete group, $k$ a field. Let $\langle G \rangle$ be the set of
conjugacy classes of elements in $G$, and given a conjugacy class $S$,
let $C_G(S)$ denote the centralizer of $S$ in $G$. Then there exists an 
isomorphism of graded $k$-vector spaces
\[ HH_*(k[G],k[G]) \cong \oplus_{S\in \langle G\rangle} H_*(C_G(S); k) .\]
\end{theorem}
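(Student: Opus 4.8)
The plan is to work directly with the standard Hochschild chain complex $CH_\bullet(k[G],k[G])$, whose degree-$n$ term $k[G]^{\otimes n+1}$ has $k$-basis the $(n+1)$-tuples $(g_0,g_1,\dots,g_n)$ of elements of $G$, and to exploit the fact that the ``total product'' $g_0 g_1 \cdots g_n$ is a conjugacy invariant of the complex. First I would observe that every face map in the cyclic bar construction preserves the conjugacy class of $g_0 g_1 \cdots g_n$: the inner faces multiply adjacent coordinates and leave the product literally unchanged, while the last (cyclic) face replaces the product by $g_n(g_0\cdots g_{n-1})$, which is its conjugate by $g_n$. Consequently the assignment $(g_0,\dots,g_n)\mapsto [\,g_0\cdots g_n\,]\in\langle G\rangle$ is constant on each differential orbit, and the Hochschild complex splits as a direct sum of subcomplexes
\[ CH_\bullet(k[G],k[G]) \cong \bigoplus_{S\in\langle G\rangle} CH_\bullet^S, \]
where $CH_\bullet^S$ is spanned by those tuples whose product lies in the conjugacy class $S$. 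This reduces the theorem to identifying the homology of each summand $CH_\bullet^S$ with $H_*(C_G(S);k)$, where $C_G(S)$ denotes the centralizer of any representative of $S$ (well-defined up to isomorphism).

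Next, fixing a representative $z\in S$, I would identify $CH_\bullet^S$ with a complex computing the group homology of the centralizer $C_G(z)$. The cleanest route is topological: the cyclic bar construction $N^{cy}_\bullet G$ is a simplicial set whose associated simplicial $k$-module has chain complex exactly $CH_\bullet(k[G],k[G])$, and whose geometric realization is homotopy equivalent to the free loop space $\mathcal{L}BG = \mathrm{Map}(S^1,BG)$ of the aspherical space $BG$. Since $BG$ is a $K(G,1)$, the path components of $\mathcal{L}BG$ correspond to free homotopy classes of loops, i.e.\ to conjugacy classes in $G$, and the component indexed by $S$ is itself a $K(C_G(z),1)$, hence homotopy equivalent to $BC_G(z)$. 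Taking $k$-homology and matching the component decomposition with the algebraic splitting above yields $H_*(CH_\bullet^S)\cong H_*(BC_G(z);k) = H_*(C_G(z);k)$, and summing over $S$ gives the claimed isomorphism. Purely algebraically, one instead builds an explicit chain homotopy equivalence between $CH_\bullet^S$ and the bar resolution $B_\bullet(C_G(z))$, the contracting data being required to remember the element conjugating $g_0\cdots g_n$ to $z$.

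The step I expect to be the main obstacle is precisely this second one: the identification of each conjugacy-class summand with centralizer homology. The decomposition by total product is essentially bookkeeping, but matching $CH_\bullet^S$ with $B_\bullet(C_G(z))$ is subtle because the naive subcomplex of tuples with product \emph{exactly} equal to $z$ fails to be closed under the differential --- the cyclic face conjugates the product by $g_n$, so one stays inside the locus $\{g_0\cdots g_n = z\}$ only when $g_n\in C_G(z)$. Resolving this requires either invoking the free loop space model, which packages the conjugation data geometrically, or constructing an explicit retraction whose homotopy tracks the conjugating element; it is here that the centralizer --- rather than, say, $G$ itself --- enters. Once this identification is in hand the theorem follows formally by summing over $\langle G\rangle$, and since we claim only an isomorphism of graded $k$-vector spaces, no compatibility of products needs to be checked.
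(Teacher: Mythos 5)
Your proposal is correct, but there is nothing in the paper to compare it against: the paper does not prove this statement at all, it simply quotes it as a known result with a pointer to Corollary~9.7.5 of Weibel's book (this is essentially Burghelea's theorem, though the paper's author professes not to know the attribution). Your sketch is, in substance, the standard argument found in that literature. The first step --- splitting $CH_\bullet(k[G],k[G])$ by the conjugacy class of the cyclic product $g_0g_1\cdots g_n$, after checking that the inner faces preserve the product on the nose and the last face conjugates it by $g_n$ --- is exactly right, and you are also right that this step is pure bookkeeping (one should note the degeneracies insert $1$ and so also respect the splitting). Your diagnosis of where the real content lies is accurate: the locus $\{g_0\cdots g_n=z\}$ is not a subcomplex, and the centralizer enters precisely in repairing this. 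Both of your proposed repairs work. The algebraic one is usually packaged as follows: $CH_\bullet^S$ computes $H_*(G;k[S])$ with $G$ acting on $k[S]$ by conjugation, and since $k[S]\cong k[G/C_G(z)]\cong \mathrm{Ind}_{C_G(z)}^G k$, Shapiro's lemma gives $H_*(G;k[S])\cong H_*(C_G(z);k)$; the ``contracting data remembering the conjugating element'' you describe is exactly the explicit form of the Shapiro isomorphism. The topological route via $|N^{cy}_\bullet G|\simeq \mathcal{L}BG$ and the identification of the $S$-component of $\mathcal{L}K(G,1)$ with $K(C_G(z),1)$ is also standard and correct, and has the advantage of making the conjugation bookkeeping automatic. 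Either version completes the proof; since only an isomorphism of graded $k$-vector spaces is claimed, no multiplicative compatibilities need to be verified, as you note.
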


\begin{corollary}\label{assoc gr dimension count}
The dimension of $HH_n({}^{Aug}{E}_0A(1), {}^{Aug}{E}_0A(1))$ as a $k$-vector space is
\[ \dim_k HH_n({}^{Aug}{E}_0A(1), {}^{Aug}{E}_0A(1)) = 3n+5.\]
\end{corollary}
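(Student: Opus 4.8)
The plan is to reduce everything to group homology using the two structural results just established. By Proposition~\ref{a(1) and dihedral group} we have $\dot{E}_0A(1)\cong \mathbb{F}_2[D_8]$, so Theorem~\ref{burghelea thm} identifies $HH_n(\dot{E}_0A(1),\dot{E}_0A(1))$ with $\bigoplus_{S\in\langle D_8\rangle} H_n(C_{D_8}(S);\mathbb{F}_2)$. Thus the whole problem becomes: enumerate the conjugacy classes of $D_8$, identify the centralizer of each, compute the mod-$2$ Betti numbers of those centralizers, and add them up.

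First I would assemble the conjugacy-class and centralizer data for $D_8$. Writing $D_8=\langle r,s\mid r^4,\,s^2,\,srs=r^{-1}\rangle$, there are five conjugacy classes: the two central elements $\{1\}$ and $\{r^2\}$, each with centralizer all of $D_8$; the rotation class $\{r,r^3\}$, with centralizer the cyclic group $\langle r\rangle\cong C_4$; and the two reflection classes $\{s,r^2s\}$ and $\{rs,r^3s\}$, each with centralizer a Klein four-group $C_2\times C_2$. In particular there are exactly five summands, which already forces $\dim HH_0=5$, matching the constant term of the claimed formula.

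Next I would feed in the mod-$2$ group homology of each centralizer. Over $\mathbb{F}_2$ the relevant Poincar\'e series are classical: $H^*(C_2\times C_2;\mathbb{F}_2)\cong \mathbb{F}_2[u,v]$ with $|u|=|v|=1$, and $H^*(D_8;\mathbb{F}_2)\cong \mathbb{F}_2[x,y,w]/(xy)$ with $|x|=|y|=1$ and $|w|=2$; each of these has Poincar\'e series $(1-t)^{-2}$, i.e.\ $\dim H_n=n+1$, whereas $H_*(C_4;\mathbb{F}_2)$ has Poincar\'e series $(1-t)^{-1}$, i.e.\ $\dim H_n=1$ for all $n$. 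Since homology and cohomology have equal dimensions over a field, these are exactly the Betti numbers I need. Establishing $H^*(D_8;\mathbb{F}_2)\cong\mathbb{F}_2[x,y,w]/(xy)$ is the one genuinely nonroutine input; I would either quote it from Adem--Milgram or derive the Poincar\'e series directly, e.g.\ from the Lyndon--Hochschild--Serre spectral sequence of the central extension $C_2\to D_8\to C_2\times C_2$ (whose $E_2$-page is much larger than the answer, so the differentials must be pinned down).

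Finally I would sum the five contributions, matching each conjugacy class to the Poincar\'e series of its centralizer: the two central classes each contribute $H_*(D_8;\mathbb{F}_2)$, the rotation class contributes $H_*(C_4;\mathbb{F}_2)$, and the two reflection classes each contribute $H_*(C_2\times C_2;\mathbb{F}_2)$, giving
\[ \dim_{\mathbb{F}_2} HH_n(\dot{E}_0A(1),\dot{E}_0A(1)) = (n+1)+(n+1)+1+(n+1)+(n+1). \]
I expect the main obstacle to be not any single hard computation but precisely this last bookkeeping step: the slope of the resulting linear function is exactly the number of the five centralizers whose mod-$2$ cohomology grows linearly, so everything hinges on that count. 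My own reckoning makes four of the five centralizers (two copies of $D_8$ and two copies of $C_2\times C_2$) contribute $\dim H_n=n+1$, and only $C_4$ contribute $\dim H_n=1$, yielding total $4n+5$. Matching the stated $3n+5$ would therefore require one further summand of constant dimension; reconciling this single slope discrepancy — by rechecking whether the two reflection classes really give distinct Klein-four centralizers and whether the central element $r^2$ really contributes the full $H_*(D_8;\mathbb{F}_2)$ — is the crux that must be settled before the formula can be asserted.
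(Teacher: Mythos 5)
Your approach is exactly the paper's: identify $\dot{E}_0A(1)$ with $\mathbb{F}_2[D_8]$ via Proposition~\ref{a(1) and dihedral group}, apply Theorem~\ref{burghelea thm}, and sum the mod-$2$ homologies of the centralizers of the five conjugacy classes. Moreover, your bookkeeping is correct, and the ``slope discrepancy'' you flag at the end is not an error on your side. The centralizer of a reflection such as $yx$ has order $|D_8|/|\mathrm{ccl}(yx)|=8/2=4$ and must contain $yx$ itself together with the central element $y^2$, so it is the Klein four-group $\{1,y^2,yx,y^3x\}$; the order-$2$ subgroup $\langle y^2\rangle$ that the paper assigns to the class $\{yx,y^3x\}$ does not even contain $yx$ and cannot be its centralizer. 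The paper's centralizer list $D_8,\ C_2\times C_2,\ C_2,\ C_4,\ D_8$ should read $D_8,\ C_2\times C_2,\ C_2\times C_2,\ C_4,\ D_8$, and with the corrected list Burghelea's theorem gives $\dim_k HH_n(\dot{E}_0A(1),\dot{E}_0A(1)) = 4(n+1)+1 = 4n+5$, not $3n+5$. So the corollary as stated is false; your execution of the common strategy is the correct one, and you should resolve your final hedge in favor of $4n+5$.

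Be aware that this is not an isolated slip with purely local consequences. The value $3n+5$ is quoted later precisely because it matches the dimension of the $s$-row of the $E_2$-term in chart~\ref{ss e2}, and that match is what drives the collapse argument for the abelianizing-to-HH-May spectral sequence. Since the $E_2$-term bounds the abutment $HH_s(\dot{E}_0A(1),\dot{E}_0A(1))^*$ from above, the correct value $4s+5$ is actually incompatible with the displayed $E_2$-term (already at $s=1$: the chart gives $8$ while Burghelea gives $9$), so either the stated $E_1$-term or the stated $d_1$-differentials must also be in error, and the defect propagates into the paper's final answer for $HH_*(A(1),A(1))$. For the statement under review, the verdict is: your proof is the intended proof, carried out correctly, and it shows the stated formula is wrong.
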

\begin{proof}
We use Proposition~\ref{a(1) and dihedral group} and Theorem~\ref{burghelea thm}. There are five conjugacy classes
of elements in $D_8= \langle a,b \mid a^2, b^4, ab = b^3 a\rangle$:
\[ 1, \{ a,b^2a\}, \{ ba, b^3a\}, \{ b,b^3\}, \{ b^2\},\] 
with centralizers 
\[ D_8, \langle a,b^2\mid a^2,(b^2)^2\rangle, 
  \langle b^2\mid (b^2)^2\rangle , \langle b\mid b^4\rangle , D_8,\]
respectively.
These centralizer subgroups are isomorphic to
\[ D_8, C_2\times C_2, C_2, C_4, D_8,\]
respectively. The homology of these groups with $\mathbb{F}_2$ coefficients
is well-known:
\begin{align*} 
 \dim_{\mathbb{F}_2} H_n(D_8; \mathbb{F}_2) &= n+1 \\ 
 \dim_{\mathbb{F}_2} H_n(C_2\times C_2; \mathbb{F}_2) &= n+1 \\
 \dim_{\mathbb{F}_2} H_n(C_2; \mathbb{F}_2) &= 1 \\
 \dim_{\mathbb{F}_2} H_n(C_4; \mathbb{F}_2) &= 1 ,\end{align*}
hence
\begin{align*}
 \dim_{\mathbb{F}_2} HH_n({}^{Aug}{E}_0A(1), {}^{Aug}{E}_0A(1)) 
  &= \dim_{\mathbb{F}_2} H_n(D_8; \mathbb{F}_2) + \dim_{\mathbb{F}_2} H_n(C_2\times C_2; \mathbb{F}_2) \\
  &\ \ \ \ \  + \dim_{\mathbb{F}_2} H_n(C_2; \mathbb{F}_2) + \dim_{\mathbb{F}_2} H_n(C_4; \mathbb{F}_2) \\
  &\ \ \ \ \  + \dim_{\mathbb{F}_2} H_n(D_8; \mathbb{F}_2) \\
  &= 3n+5.\end{align*}
\end{proof}

To avoid any confusion about notation in Definition \ref{def of ab filt}: $(x)$ denotes the two-sided ideal generated by an element $x$. The notation $Q_1$ is Milnor's notation for the element $\Sq^1\Sq^2+\Sq^2\Sq^1$ of $A(1)$.
\begin{definition}\label{def of ab filt}
We now define a new filtration on $A(1)$ which we will call {\em the abelianizing filtration on $A(1)$}. To notationally distinguish it 
from the May filtration, we will write ${}^{Ab}{F}^n(A(1))$ for its filtration stages,
and ${}^{Ab}{E}_0(A(1))$ for its associated graded algebra.
The abelianizing filtration is defined as follows. 
\begin{align*} 
{}^{Ab}{F}^0(A(1)) &= A(1) \\
{}^{Ab}{F}^1(A(1)) &= ( \Sq^1,\Sq^2) \\
{}^{Ab}{F}^2(A(1)) &= ( \Sq^2 ) \\
{}^{Ab}{F}^3(A(1)) &= ( \Sq^1\Sq^2, \Sq^2\Sq^1 ) \\
{}^{Ab}{F}^4(A(1)) &= ( Q_1 ) \\
{}^{Ab}{F}^5(A(1)) &= ( \Sq^1Q_1,\Sq^2Q_1) \\
{}^{Ab}{F}^6(A(1)) &= ( \Sq^2Q_1) \\
{}^{Ab}{F}^7(A(1)) &= ( \Sq^1\Sq^2Q_1) \\
{}^{Ab}{F}^8(A(1)) &= 0. \end{align*}
If $x\in A(1)$, we sometimes write $\dbtilde{x}$ for the associated element in ${}^{Ab}{E}_0A(1)$.
\end{definition}

\begin{observation}\label{assoc graded of abelianizing filt}
By routine calculation, one sees that the abelianizing filtration on $A(1)$ has the following properties:
\begin{itemize}
\item The abelianizing filtration is finer than the May filtration,
that is, ${}^{Aug}F^n(A(1))\subseteq {}^{Ab}{F}^n(A(1))$ for all $n$.
\item The abelianizing filtration is a multiplicative filtration, that
is, if $x\in {}^{Ab}{F}^m(A(1))$ and $y\in {}^{Ab}{F}^n(A(1))$, then
$xy\in {}^{Ab}{F}^{m+n}(A(1))$.
\item Furthermore, the abelianizing filtration is a Hopf filtration,
that is, if $x\in {}^{Ab}{F}^m(A(1))$, then 
\[ \Delta(x) \in \sum_{i=0}^m {}^{Ab}{F}^i(A(1)) \otimes_{\mathbb{F}_2} {}^{Ab}{F}^{m-i}(A(1)).\]
\item The associated graded Hopf algebra ${}^{Ab}{E}_0(A(1))$ of the abelianizing
filtration on $A(1)$ is the exterior algebra $E(\dbtilde{\Sq}^1, \dbtilde{\Sq}^2,\dbtilde{Q}_1)$, with $\dbtilde{\Sq}^1$ and $\dbtilde{Q}_1$ primitive, and with $\Delta(\dbtilde{\Sq}^2) = \dbtilde{\Sq}^2\otimes 1 + \dbtilde{\Sq}^1 \otimes \dbtilde{\Sq}^1 + 1\otimes \dbtilde{\Sq}^2$.
The topological degrees (i.e., inherited from the usual grading on the Steenrod algebra) of $\dbtilde{\Sq}^1, \dbtilde{\Sq}^2,\dbtilde{Q}_1$ are $1,3,$ and $4$, respectively, and their abelianizing degrees (i.e., inherited from the abelianizing filtration) 
are $1,2,$ and $4$, respectively.
\item The $\mathbb{F}_2$-linear dual Hopf algebra $\left({}^{Ab}{E}_0(A(1))\right)^*$ is 
\[ \left({}^{Ab}{E}_0(A(1))\right)^* \cong \mathbb{F}_2[\overline{\xi}_{1,0}, \overline{\xi}_{2,0}]/\left(\overline{\xi}_{1,0}^4, \overline{\xi}_{2,0}^2\right),\]
with $\overline{\xi}_{1,0},\overline{\xi}_{2,0}$ both
primitive.
In particular, $\left({}^{Ab}{E}_0(A(1))\right)^*$ is isomorphic to $\left({}^{Aug}{E}_0(A(1))\right)^*$ as $\mathbb{F}_2$-coalgebras, but not as Hopf algebras.
\end{itemize}
\end{observation}

\section{Running the HH-May and abelianizing spectral sequences.}

\subsection{Input.}

Beginning in this section, we will sometimes use the standard notations:
\begin{itemize}
\item $\Gamma_k(x_1, \dots ,x_n)$ denotes the divided power $k$-algebra on generators $x_1, \dots ,x_n$,
\item $E(x_1, \dots ,x_n)$ denotes the exterior $k$-algebra on generators $x_1, \dots ,x_n$,
\item and $P(x_1, \dots ,x_n)$ denotes the polynomial $k$-algebra on generators $x_1, \dots ,x_n$.
\end{itemize}
\begin{lemma}\label{hochschild homology of exterior alg}
Let $k$ be a field of characteristic two, and give $k[x]/x^2$ the structure of a Hopf algebra over $k$ by letting $\Delta(x) = x\otimes 1 + 1\otimes x$.
Then we have an isomorphism of graded Hopf algebras
\[ HH_*(k[x]/x^2, k[x]/x^2) \cong k[x]/x^2\otimes_k \Gamma_k(\sigma x),\]
where:
\begin{itemize}
\item $HH_*(k[x]/x^2, k[x]/x^2)$ inherits its coproduct from that of \linebreak $k[x]/x^2$, as in Remark~\ref{explicit coproduct remark},
\item $\sigma x$ denotes the homology class of the $1$-cycle $1[x]$, i.e., $1\otimes x$ (and consequently $\sigma x$ is in homological degree $1$),
\item $\Delta(x) = x\otimes 1 + 1\otimes x$, and 
\item $\Delta(\gamma_n(\sigma x)) = \sum_{i=0}^n \gamma_i(\sigma x) \otimes \gamma_{n-i}(\sigma x)$, where $\gamma_n$ is the $n$th divided power.
\end{itemize}
As a consequence, the Hopf $k$-algebra $CartH^*\left((k[x]/x^2)^*,(k[x]/x^2)^*\right)$ is isomorphic to $k[\xi]/\xi^2\otimes_k k[h]$, with $\xi,h$ both primitive. Here $\xi$ denotes the dual basis element to $x$, and $h$ is the dual basis element to $\sigma x$. Consequently $\xi,h$ are in cohomological degrees $0,1$, respectively.
\end{lemma}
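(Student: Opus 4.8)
The plan is to exploit that $A = k[x]/x^2$ is a commutative $k$-algebra, so its Hochschild homology is computed by the (here honestly commutative, since $\mathrm{char}\,k = 2$) shuffle-product DGA given by the \emph{normalized} Hochschild chain complex, which moreover carries the coproduct of Remark~\ref{explicit coproduct remark}. First I would pass to the normalized complex, whose degree-$n$ term is $A \otimes_k \overline{A}^{\otimes n}$ with $\overline{A} = A/k\cdot 1$ one-dimensional, spanned by $x$. Writing a typical normalized $n$-chain as $a_0[x \otimes \dots \otimes x]$, the Hochschild differential sends it to $a_0 x[x^{\otimes(n-1)}]$ plus the interior face terms plus $x a_0[x^{\otimes(n-1)}]$; every interior term dies because $x^2 = 0$, while the two outer terms agree (as $A$ is commutative) and cancel in characteristic $2$. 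Hence \emph{every} differential in the normalized complex vanishes, so $HH_n(A,A) \cong A \otimes_k \overline{A}^{\otimes n} \cong A$ with $k$-basis $\{1[x^{\otimes n}], x[x^{\otimes n}]\}$. In particular $\dim_k HH_n(A,A) = 2$ for all $n$, matching $k[x]/x^2 \otimes_k \Gamma_k(\sigma x)$.

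Next I would name the generators: set $\sigma x = 1[x]$ and more generally $\gamma_n(\sigma x) := 1[x^{\otimes n}] \in HH_n(A,A)$, which together with their $x$-multiples give the basis found above. The divided-power structure is then forced by the shuffle product: the shuffle of $1[x^{\otimes m}]$ and $1[x^{\otimes n}]$ is the sum over all $(m,n)$-shuffles of $1[x^{\otimes(m+n)}]$, and since every tensor slot is the same element $x$, all $\binom{m+n}{m}$ shuffles contribute the identical term. Thus $\gamma_m(\sigma x)\cdot \gamma_n(\sigma x) = \binom{m+n}{m}\,\gamma_{m+n}(\sigma x)$, which is exactly the defining relation of $\Gamma_k(\sigma x)$; in particular $(\sigma x)^2 = 2\,\gamma_2(\sigma x) = 0$, consistent with $\gamma_2(\sigma x)$ being nonzero. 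Together with the identification $HH_0(A,A) = A = k[x]/x^2$, this yields the algebra isomorphism $HH_*(A,A) \cong k[x]/x^2 \otimes_k \Gamma_k(\sigma x)$.

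For the coproduct I would apply the explicit formula of Remark~\ref{explicit coproduct remark} to the representative $\gamma_n(\sigma x) = 1[x^{\otimes n}]$. Since $\Delta(1) = 1\otimes 1$ in $A$, only the ``cut'' splittings survive, giving
\[ \Delta\bigl(1[x^{\otimes n}]\bigr) = \sum_{j=0}^n 1[x^{\otimes j}] \otimes 1[x^{\otimes(n-j)}] = \sum_{j=0}^n \gamma_j(\sigma x)\otimes \gamma_{n-j}(\sigma x), \]
which is the asserted comultiplication; and on $HH_0 = A$ the class $x$ is primitive because $x$ is primitive in $k[x]/x^2$. Verifying that the shuffle product and this coproduct are mutually compatible (so that $HH_*$ is a Hopf algebra, indeed the tensor product of the sub-Hopf-algebras $k[x]/x^2$ and $\Gamma_k(\sigma x)$) is the standard compatibility for a commutative cocommutative Hopf algebra, which I would check on the explicit generators. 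The dual statement then follows by dualizing: $k[x]/x^2$ with $x$ primitive is self-dual, giving $k[\xi]/\xi^2$ with $\xi$ primitive, while the divided-power Hopf algebra $\Gamma_k(\sigma x)$ dualizes, by the classical duality between divided powers and polynomials, to $k[h]$ with $h$ dual to $\sigma x$: the coproduct $\Delta\gamma_n = \sum_i \gamma_i \otimes \gamma_{n-i}$ dualizes to the product making $h^n$ dual to $\gamma_n(\sigma x)$, and the relation $\gamma_m\gamma_n = \binom{m+n}{m}\gamma_{m+n}$ dualizes to the coproduct rendering $h$ primitive. Assembling the factors gives $k[\xi]/\xi^2 \otimes_k k[h]$ with $\xi,h$ primitive in degrees $0$ and $1$.

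I expect the only genuinely delicate point to be the bookkeeping in the divided-power identification: keeping straight that characteristic $2$ forces $(\sigma x)^2 = 0$ even though the homology is nonzero in every degree, so that one is obliged to use $\Gamma_k$ (divided powers) rather than a polynomial algebra, and confirming that the shuffle count really produces the binomial coefficients $\binom{m+n}{m}$. Everything else is either formal or a short direct computation on the explicit representatives $1[x^{\otimes n}]$.
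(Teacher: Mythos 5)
Your proposal is correct and fills in exactly the computation the paper's one-line proof alludes to: it uses the same cycle representatives $1[x^{\otimes n}]$ for $\gamma_n(\sigma x)$, the same shuffle-product count giving the divided-power relations, and the same application of Remark~\ref{explicit coproduct remark} for the coproduct, followed by the standard dualization of $\Gamma_k$ to a polynomial algebra. No substantive difference from the paper's intended argument.
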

\begin{proof}
Using the cycle representative $1\left[ x\otimes \dots \otimes x\right]$ for $\gamma_n(\sigma x)$, with $n$ tensor factors of $x$, the claims made are
easy (and classical) computations of coproducts and shuffle products in the Hochschild chain complex, using Remark~\ref{explicit coproduct remark}.
\end{proof}

\begin{lemma}\label{hochschild homology of truncated poly alg}
Let $k$ be a field of characteristic two, and give $k[x,y]/(x^2,y^2)$ the structure of a Hopf algebra over $k$ by letting $\Delta(x) = x\otimes 1 + 1\otimes x$
and by letting $\Delta(y) = y\otimes 1 + x\otimes x + 1\otimes y$.
Then we have an isomorphism of graded Hopf algebras
\[ HH_*\left(k[x,y]/(x^2,y^2), k[x,y]/(x^2,y^2)\right) \cong k[x,y]/(x^2,y^2) \otimes_k \Gamma_k(\sigma x,\sigma y),\]
where:
\begin{itemize}
\item $\sigma x$ (respectively, $\sigma y$) denotes the homology class of the $1$-cycle $1[ x]$ (respectively, $1[y]$), and consequently $\sigma x$ and $\sigma y$ are each in homological degree $1$,
\item the coproducts $\Delta(x)$ and $\Delta(y)$ are exactly as in $k[x,y]/(x^2,y^2)$, 
\item $\Delta(\gamma_n(\sigma x)) = \sum_{i=0}^n \gamma_i(\sigma x) \otimes \gamma_{n-i}(\sigma x)$, and
\item $\Delta(\gamma_n(\sigma y)) = \sum_{i=0}^n \gamma_i(\sigma y) \otimes \gamma_{n-i}(\sigma y)$.
\end{itemize}
Consequently, the Hopf $k$-algebra $CartH^*\left( (k[x,y]/(x^2,y^2))^*, (k[x,y]/(x^2,y^2))^*\right)$ is isomorphic to $k[\xi]/\xi^4\otimes_k k[h_x,h_y]$, with $\xi,h_x,h_y$ all primitive.
Here $\xi$ is the dual basis element to $x$, and $h_x,h_y$ are the dual basis elements to $\sigma x,\sigma y$, respectively. Consequently $\xi,h_x,h_y$ are in grading degrees $0,1,1$, respectively.
\end{lemma}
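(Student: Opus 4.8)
The plan is to reduce everything to the single-variable computation of Lemma~\ref{hochschild homology of exterior alg} together with a direct cycle-level computation of the coproduct. First I would exploit the \emph{algebra} (not Hopf algebra) isomorphism $k[x,y]/(x^2,y^2) \cong k[x]/x^2 \otimes_k k[y]/y^2$. Since Hochschild homology depends only on the underlying algebra, and since over a field the Hochschild complex of a tensor product of algebras is, via the shuffle product, the tensor product of the individual Hochschild complexes (the K\"unneth theorem for $HH_*$; see~\cite{MR1269324}), I would obtain an isomorphism of graded algebras
\[ HH_*(k[x,y]/(x^2,y^2)) \cong HH_*(k[x]/x^2) \otimes_k HH_*(k[y]/y^2).\]
Feeding in Lemma~\ref{hochschild homology of exterior alg} for each tensor factor then yields the graded algebra isomorphism $HH_* \cong k[x,y]/(x^2,y^2)\otimes_k \Gamma_k(\sigma x, \sigma y)$, with cycle representatives $x$ and $y$ for the truncated-polynomial generators and $1[x^{\otimes n}]$, $1[y^{\otimes n}]$ for $\gamma_n(\sigma x)$, $\gamma_n(\sigma y)$ respectively.

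The subtle point is the coproduct, and here the K\"unneth argument is of no help: the Hopf structure on $k[x,y]/(x^2,y^2)$ is \emph{not} a tensor product of Hopf algebras, precisely because the term $x\otimes x$ in $\Delta(y)$ couples the two variables. So I would compute the coproduct directly from the cyclic-bar formula of Remark~\ref{explicit coproduct remark}, using the cycle representatives above. The key structural feature of that formula is that the coalgebra diagonal of $A$ is applied only to the leading (bimodule) tensor factor $a_0$, while the remaining factors $a_1,\dots,a_n$ are merely partitioned between the two sides. For the $0$-chains $x$ and $y$ this reproduces the Hopf coproduct verbatim---so $\Delta(y)$ retains its $x\otimes x$ term---whereas for $1[x^{\otimes n}]$ and $1[y^{\otimes n}]$ the leading factor is the grouplike element $1$, so only the partition contributes and no coupling between $x$ and $y$ can occur. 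This gives exactly
\[ \Delta(1[y^{\otimes n}]) = \sum_{j=0}^n 1[y^{\otimes j}] \otimes 1[y^{\otimes (n-j)}],\]
the stated divided-power coproduct, and similarly for $\sigma x$. Along the way I would record that $1[y^{\otimes n}]$ is indeed a cycle: the interior faces vanish because $y^2=0$, and the two outer faces are equal and cancel in characteristic two (exactly as for $x$ in Lemma~\ref{hochschild homology of exterior alg}). Because this coproduct is multiplicative for the shuffle product, its values on the algebra generators $x,y,\gamma_n(\sigma x),\gamma_n(\sigma y)$ determine it completely, so these computations pin down the entire Hopf structure.

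Finally, the dual statement follows by dualizing this Hopf algebra over $k$. The divided power Hopf algebra $\Gamma_k(\sigma x,\sigma y)$ dualizes to the polynomial Hopf algebra $k[h_x,h_y]$, with $h_x^n, h_y^n$ dual to $\gamma_n(\sigma x),\gamma_n(\sigma y)$ and $h_x,h_y$ primitive (dual to the divided-power coproduct). Dualizing the coalgebra $k[x,y]/(x^2,y^2)$, the element $\xi$ dual to $x$ satisfies: $\xi^2$ is dual to $y$, $\xi^3$ is dual to $xy$, and $\xi^4=0$ by a dimension count, so this factor dualizes to $k[\xi]/\xi^4$ with $\xi$ primitive; the nontrivial $x\otimes x$ term in $\Delta(y)$ is exactly what forces $\xi^2\neq 0$. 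Assembling the two tensor factors gives the asserted $k[\xi]/\xi^4\otimes_k k[h_x,h_y]$ with all generators primitive.

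I anticipate the main obstacle to be conceptual rather than computational: making sure that the $x\otimes x$ coupling term in $\Delta(y)$ is correctly accounted for---appearing in the coproduct of the bottom class $y$ but \emph{not} propagating into the coproducts of the divided powers $\gamma_n(\sigma y)$. Once one is clear that the cyclic-bar coproduct only diagonalizes the leading tensor factor, the remaining steps are the same classical shuffle-product and coproduct manipulations used in Lemma~\ref{hochschild homology of exterior alg}.
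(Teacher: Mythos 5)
Your proposal is correct and follows essentially the same route as the paper: the paper's proof is simply ``just as in the one-variable lemma,'' i.e.\ explicit cycle representatives $1[x^{\otimes n}]$, $1[y^{\otimes n}]$ with the coproduct read off from Remark~\ref{explicit coproduct remark}, together with precisely your key observation that the cyclic-bar coproduct only diagonalizes the leading tensor factor $a_0$, so the $x\otimes x$ term in $\Delta(y)$ never enters $\Delta(\gamma_n(\sigma y))$. Your explicit invocation of the K\"unneth decomposition for the algebra structure and your careful dualization (with $\xi^2$ dual to $y$ forced by the $x\otimes x$ term) are just slightly more detailed renderings of the same argument.
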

\begin{proof}
Just as in Lemma~\ref{hochschild homology of exterior alg}.
(Note that $\Delta(\gamma_n(\sigma y))$ does not involve $x$ or $\sigma x$, even though $\Delta(y)$ involves $x$; this is simply because
the formula for $\Delta(\sigma y) = \Delta\left( 1\left[ y\right]\right)$, given in Remark~\ref{explicit coproduct remark}, does not actually make any use of the coproduct of $y$ in $k[x,y]/(x^2,y^2)$.)
\end{proof}

In the spectral sequences described in Proposition \ref{SSs and cocycle reps}, the tridegrees $(s,t,u)$ are as follows: $s$ is the cohomological degree, $t$ is the filtration degree, and $u$ is the internal/topological degree arising from the grading on the algebra $A(1)$ itself.
\begin{prop}\label{SSs and cocycle reps}
There exist four strongly convergent trigraded multiplicative spectral sequences, each with differential of the form $d_r^{s,t,u}  :  E_r^{s,t,u} \rightarrow E_r^{s+1,t-r, u}$:

\hspace*{-1.23cm}
\begin{tabular}{|c|c|c|}
\hline
 Name & $E_1^{s,t,u}$ & Abutment \\
\hline
\hline
 Abelianizing & $CartH^{s,t,u}\left({}^{Ab}{E}^0(A(1)^*),{}^{Ab}{E}^0(A(1)^*)\right)$ & $CartH^s(A(1)^*,A(1)^*)$\\
 &  $\cong \left(\mathbb{F}_2[x_{10}]/(x_{10}^4) \otimes_{\mathbb{F}_2} E(x_{20})\right.$ & \\
 & $\ \ \ \ \ \ \left.\otimes_{\mathbb{F}_2} P(h_{10},h_{11},h_{20})\right)^{s,t,u}$ & \\
\hline
HH-May  & $CartH^{s,t,u}\left({}^{Aug}{E}^0(A(1)^*),{}^{Aug}{E}^0(A(1)^*)\right)$ & $CartH^s(A(1)^*,A(1)^*)$ \\
\hline
HH-May with & $CartH^{s,t,u}\left({}^{Aug}{E}^0(A(1)^*),\mathbb{F}_2)\right)$ & $CartH^s(A(1)^*,\mathbb{F}_2)$ \\
 coeffs. in $\mathbb{F}_2$ &&\\
\hline
Abelianizing 
 & $CartH^{s,t,u}\left({}^{Ab}{E}^0(A(1)^*),{}^{Ab}{E}^0(A(1)^*)\right)$ & $CartH^s({}^{Aug}{E}^0A(1)^*,{}^{Aug}{E}^0A(1)^*)$ \\ 
 to HH-May & $\cong\left(\mathbb{F}_2[x_{10}]/(x_{10}^4) \otimes_{\mathbb{F}_2} E(x_{20})\right.$ & \\ & $\ \ \ \ \ \ \left. \otimes_{\mathbb{F}_2} P(h_{10},h_{11},h_{20})\right)^{s,t,u}$ &\\
\hline
\end{tabular}

Furthermore, there exists a morphism of spectral sequences from the HH-May spectral sequence to the HH-May spectral sequence with coefficients in $\mathbb{F}_2$.
\end{prop}
\begin{proof}
Consequence of Propositions \ref{may ss for dual hh} and \ref{assoc graded of may filt}, Observation \ref{assoc graded of abelianizing filt}, and Lemmas \ref{hochschild homology of exterior alg} and~\ref{hochschild homology of truncated poly alg}.
\end{proof}
Of course the machinery we are using produces much more than just the four spectral sequences listed in Proposition \ref{SSs and cocycle reps}, but those four are the ones we will actually use in this paper.

In Proposition \ref{SSs and cocycle reps} and elsewhere, we write $x_{i0}$ for the cohomology class of the Cartier $0$-cocycle $\overline{\xi}_{i,0}$, and $h_{ij}$ for the cohomology class of the Cartier $1$-cocycle $1\left[ \overline{\xi}_{i,j}\right]$, where $\overline{\xi}_{1,0},\overline{\xi}_{1,1},\overline{\xi}_{2,0}$ are the dual basis elements to the elements $\dbtilde{\Sq}^1,\dbtilde{\Sq}^2,\dbtilde{Q}_1$, respectively, in ${}^{Ab}{E}_0A(1)$.

The tridegrees of the generators of $CartH^{*,*,*}\left({}^{Ab}{E}^0(A(1)^*),{}^{Ab}{E}^0(A(1)^*)\right)$, and cocycle representatives for those cohomology classes in the
Cartier cochain complex, are as follows:
\begin{align*} 
x_{10} = [\overline{\xi}_{1,0}] & \in CartC^{0,1,1}\left({}^{Ab}{E}^0(A(1)^*),{}^{Ab}{E}^0(A(1)^*)\right) \\
x_{20} = [\overline{\xi}_{2,0}] & \in CartC^{0,4,3}\left({}^{Ab}{E}^0(A(1)^*),{}^{Ab}{E}^0(A(1)^*)\right) \\
h_{10} = [1\otimes \overline{\xi}_{1,0}] & \in CartC^{1,1,1}\left({}^{Ab}{E}^0(A(1)^*),{}^{Ab}{E}^0(A(1)^*)\right) \\
h_{11} = [1\otimes \overline{\xi}_{1,1}] & \in CartC^{1,2,2}\left({}^{Ab}{E}^0(A(1)^*),{}^{Ab}{E}^0(A(1)^*)\right) \\
h_{20} = [1\otimes \overline{\xi}_{2,0}] & \in CartC^{1,4,3}\left({}^{Ab}{E}^0(A(1)^*),{}^{Ab}{E}^0(A(1)^*)\right) \end{align*}

\begin{prop}\label{ss comparison with supertrivial coeffs}
The HH-May spectral sequence with coefficients in $\mathbb{F}_2$, from Proposition~\ref{SSs and cocycle reps}, is isomorphic (beginning with the $E_1$-term) to the 
classical May spectral sequence\footnote{A good reference for this classical May spectral sequence is Example 3.2.7 of \cite{MR860042}.} for $A(1)$, $\Ext_{E^0A(1)}^{*,*,*}(\mathbb{F}_2,\mathbb{F}_2) \Rightarrow \Ext_{A(1)}^{*,*}(\mathbb{F}_2,\mathbb{F}_2)$. 
\end{prop}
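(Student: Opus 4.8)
The plan is to exhibit both spectral sequences as the spectral sequences of a single filtered cochain complex, so that they are isomorphic from the $E_1$-page onward, compatibly with their abutments.

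First I would pin down the identification of the input and the abutment. For any finite-dimensional augmented $k$-algebra $A$ with augmentation $\epsilon$, the cyclic bar complex $CH_\bullet(A,k)$ with coefficients in the trivial bimodule $k$ becomes, upon fixing the coefficient tensor factor to be $1\in k$, the one-sided bar complex $B_\bullet(k,A,k)$ computing $\operatorname{Tor}^A_*(k,k)$: the two outermost face maps collapse to $\epsilon$ acting on the end tensor factors, while the interior faces are the internal multiplications. Dualizing by means of isomorphism~\ref{iso 8} of Proposition~\ref{duality between hh and cohh}, together with the finite-dimensional universal coefficient argument used there, gives a natural identification
\[ coHH^n(A^*,k) \cong \hom_k\!\left(\operatorname{Tor}^A_n(k,k),k\right) \cong \Ext^n_A(k,k). \]
Applying this with $A=A(1)$ identifies the abutment $coHH^*(A(1)^*,\mathbb{F}_2)$ with $\Ext_{A(1)}(\mathbb{F}_2,\mathbb{F}_2)$, and applying it with $A=\dot{E}_0A(1)$ identifies the $E_1$-term $coHH^*\!\left(\dot{E}^0(A(1)^*),\mathbb{F}_2\right)$ with $\Ext_{E^0A(1)}(\mathbb{F}_2,\mathbb{F}_2)$.

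Next I would match the two filtrations. By its construction in Proposition~\ref{may ss for dual hh, supertrivial coeffs}, the HH-May spectral sequence with coefficients in $\mathbb{F}_2$ is the spectral sequence of the cyclic cobar complex $coCH^\bullet(A(1)^*,\mathbb{F}_2)$ filtered by total May degree, i.e. by the grading dual to the powers of the augmentation ideal of $A(1)$. Under the identification of the previous paragraph, this is precisely the $\mathbb{F}_2$-linear dual of the bar complex of $A(1)$ equipped with the filtration induced by the May filtration on $A(1)$ --- and that filtered complex is exactly the one whose associated spectral sequence is the classical May spectral sequence $\Ext_{E^0A(1)}^{*,*,*}(\mathbb{F}_2,\mathbb{F}_2)\Rightarrow \Ext_{A(1)}^{*,*}(\mathbb{F}_2,\mathbb{F}_2)$. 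Since isomorphic filtered cochain complexes have isomorphic spectral sequences, the two spectral sequences agree from $E_1$ on, compatibly with the identification of abutments established above.

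The hard part will be checking that the identification in the first step is genuinely an isomorphism of \emph{filtered} complexes, and not merely an abstract coincidence of $E_1$-pages and abutments; two points require care. First, one must verify that the ``wrap-around'' coface map of the cyclic cobar construction (the map $d^n$ assembled from the cyclic permutation $\tau$ and $\tilde{\eta}_L$ in Definition~\ref{def of cyclic cobar with supertrivial coeffs}) really does reduce, in the trivial-coefficient case, to the corresponding ordinary cobar coface, so that the cyclic structure contributes nothing beyond the cobar differential. This is exactly where the hypothesis of coefficients in $k$, rather than in $A$, is essential: with coefficients in $A$ the cyclic term genuinely wraps a copy of the algebra around, whereas with coefficients in $k$ it is absorbed into the (co)augmentation. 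Second, one must reconcile normalization conventions, since the classical May spectral sequence is customarily presented via the reduced cobar complex on the coaugmentation coideal while the cyclic cobar complex is a priori unnormalized; I would handle this by passing to normalized subcomplexes and noting that the inclusion is a filtered quasi-isomorphism (the filtrations are finite here, all objects being finite-dimensional, so a filtered quasi-isomorphism induces an isomorphism of spectral sequences). Once these compatibilities are in place, the isomorphism of spectral sequences is formal.
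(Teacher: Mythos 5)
Your proposal is correct and is essentially the paper's own argument, expanded: the paper's proof consists of the single observation that the cyclic cobar complex of $A(1)^*$ with coefficients in $\mathbb{F}_2$ \emph{is} the classical cobar complex of Definition~A.1.2.11 of~\cite{MR860042} (the wrap-around coface degenerating to the coaugmentation coface, exactly as you note), with matching May filtrations. Your additional care about identifying input and abutment with $\Ext$ via duality, and about normalized versus unnormalized cobar complexes, fills in details the paper leaves implicit but does not change the route.
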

\begin{proof}
By Proposition~\ref{duality between hh and cohh}, the HH-May spectral sequence with coefficients in $\mathbb{F}_2$ has
input
\begin{align*}  E_1^{s,t,u} 
 &\cong CartH^{s,t,u}\left({}^{Aug}{E}^0(A(1)^*),\mathbb{F}_2\right) \\
 &\cong \hom_{\mathbb{F}_2}\left( HH_{s,t,u}\left({}^{Aug}{E}_0A(1),\mathbb{F}_2\right),\mathbb{F}_2\right) \\
 &\cong \hom_{\mathbb{F}_2}\left( \Tor_{s,t,u}^{{}^{Aug}{E}_0A(1)\otimes_{\mathbb{F}_2} {}^{Aug}{E}_0A(1)^{\op}}\left({}^{Aug}{E}_0A(1),\mathbb{F}_2\right),\mathbb{F}_2\right) \\
 &\cong \hom_{\mathbb{F}_2}\left( \Tor_{s,t,u}^{{}^{Aug}{E}_0A(1)^{\op}}\left(\mathbb{F}_2,\mathbb{F}_2\right),\mathbb{F}_2\right) \\
 &\cong \Ext^{s,t,u}_{{}^{Aug}{E}_0A(1)}\left(\mathbb{F}_2,\mathbb{F}_2\right),\end{align*}
using the usual $\Ext$-$\Tor$ duality properties of finite-dimensional Hopf algebras (in this case, ${}^{Aug}{E}_0A(1)$).
The same analysis on the abutment of the spectral sequence yields
\begin{align*}  E_1^{s,t,u} 
 &\cong CartH^{s,t,u}\left(A(1)^*,\mathbb{F}_2\right) \\
 &\cong \Ext^{s,t,u}_{A(1)}\left(\mathbb{F}_2,\mathbb{F}_2\right),\end{align*}
so the $E_1$-term of the HH-May spectral sequence is isomorphic to the $E_1$-term of the classical May spectral sequence for $A(1)$, and their
abutments also are isomorphic.
The fact that the spectral sequences themselves are isomorphic is due to the easy observation that the Cartier cochain complex of $A(1)$ with coefficients in $\mathbb{F}_2$ is isomorphic to the classical cobar complex of $A(1)$, as in Definition~A.1.2.11 of~\cite{MR860042}, and the May filtration on one coincides with the May filtration on the other.
\end{proof}

\subsection{$d_1$-differentials.}

\begin{prop}\label{d1 diffs}
In both the abelianizing spectral sequence and the abelianizing-to-HH-May spectral sequence, 
the $d_1$ differentials are given on the multiplicative generators by
\begin{align*}
 d_1(x_{10}) &= 0, \\
 d_1(x_{20}) &= x_{10}h_{11} + x_{10}^2h_{10}, \\
 d_1(h_{10}) &= 0, \\
 d_1(h_{11}) &= 0,\mbox{\ and} \\
 d_1(h_{20}) &= h_{10}h_{11}.\end{align*}
Using these formulas and the Leibniz rule, we get the $d_1$ differential on all elements
of the $E_1$-terms of the abelianizing and abelianizing-to-HH-May spectral sequences.
\end{prop}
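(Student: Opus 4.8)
The plan is to apply property~(\ref{computability property}) of Proposition~\ref{may ss for dual hh} directly to the explicit cyclic cobar cocycle representatives recorded in Proposition~\ref{SSs and cocycle reps}. For each multiplicative generator I lift its representative from the cyclic cobar complex of $\ddot{E}_0(A(1)^*)$ to the cyclic cobar complex of $A(1)^*$ (for the abelianizing spectral sequence), or of $\dot{E}_0(A(1)^*)$ (for the abelianizing-to-HH-May spectral sequence), apply the cyclic cobar differential $d=\sum_i d^i$ of Definition~\ref{def of cyclic cobar} (all signs vanish in characteristic $2$), and read off the image of the result in the cyclic cobar complex of $\ddot{E}_0(A(1)^*)$. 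Since $\overline{\xi}_{i,j}$ is by definition the image of $\overline{\xi}_i^{2^j}$, the natural lifts are $\overline{\xi}_1$ for $x_{10}$, $1\otimes\overline{\xi}_1$ for $h_{10}$, $1\otimes\overline{\xi}_1^2$ for $h_{11}$, $\overline{\xi}_2$ for $x_{20}$, and $1\otimes\overline{\xi}_2$ for $h_{20}$.

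The three vanishing statements are immediate. In $A(1)^*$ (and in $\dot{E}_0(A(1)^*)$) the elements $\overline{\xi}_1$ and $\overline{\xi}_1^2$ are primitive, so a direct expansion of $d^0+d^1$ on the $0$-cochain $\overline{\xi}_1$, and of $d^0+d^1+d^2$ on the $1$-cochains $1\otimes\overline{\xi}_1$ and $1\otimes\overline{\xi}_1^2$, produces only pairs of equal terms, which cancel mod $2$. Hence the chosen lifts are already cocycles and $d_1(x_{10})=d_1(h_{10})=d_1(h_{11})=0$.

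The content lies in the two remaining generators, and both differentials arise from the single non-primitive term $\overline{\xi}_1\otimes\overline{\xi}_1^2$ of $\Delta(\overline{\xi}_2)=\overline{\xi}_2\otimes 1+\overline{\xi}_1\otimes\overline{\xi}_1^2+1\otimes\overline{\xi}_2$ (and identically from $\overline{\xi}_{1,0}\otimes\overline{\xi}_{1,1}$ in $\Delta(\overline{\xi}_{2,0})$). Expanding $d^0+d^1$ on the $0$-cochain $\overline{\xi}_2$, the $\overline{\xi}_2\otimes 1$ and $1\otimes\overline{\xi}_2$ terms cancel in pairs and one is left with $\overline{\xi}_1\otimes\overline{\xi}_1^2+\overline{\xi}_1^2\otimes\overline{\xi}_1$; likewise expanding $d^0+d^1+d^2$ on $1\otimes\overline{\xi}_2$ leaves only $1\otimes\overline{\xi}_1\otimes\overline{\xi}_1^2$. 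Projecting into $\ddot{E}_0(A(1)^*)$ yields the cocycles $\overline{\xi}_{1,0}\otimes\overline{\xi}_{1,1}+\overline{\xi}_{1,1}\otimes\overline{\xi}_{1,0}$ and $1\otimes\overline{\xi}_{1,0}\otimes\overline{\xi}_{1,1}$ representing $d_1(x_{20})$ and $d_1(h_{20})$.

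It remains to recognize these cocycles as the asserted products, and this is the step demanding the most care. Using the dual of the coproduct of Remark~\ref{explicit coproduct remark}, the product of two cochains multiplies the basepoint ($0$th) tensor factors and concatenates the bracketed factors; thus $x_{10}h_{11}=\overline{\xi}_{1,0}\otimes\overline{\xi}_{1,1}$, $x_{10}^2h_{10}=\overline{\xi}_{1,0}^2\otimes\overline{\xi}_{1,0}$, and $h_{10}h_{11}=1\otimes\overline{\xi}_{1,0}\otimes\overline{\xi}_{1,1}$. The essential bookkeeping point is that in $\ddot{E}_0(A(1)^*)=\mathbb{F}_2[\overline{\xi}_{1,0},\overline{\xi}_{2,0}]/(\overline{\xi}_{1,0}^4,\overline{\xi}_{2,0}^2)$ one has $\overline{\xi}_{1,1}=\overline{\xi}_{1,0}^2$, so that $\overline{\xi}_{1,1}\otimes\overline{\xi}_{1,0}=\overline{\xi}_{1,0}^2\otimes\overline{\xi}_{1,0}$ represents $x_{10}^2h_{10}$; this is exactly what makes the second summand of $d_1(x_{20})$ appear. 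Comparing then gives $d_1(x_{20})=x_{10}h_{11}+x_{10}^2h_{10}$ and $d_1(h_{20})=h_{10}h_{11}$. Since the coproducts of $\overline{\xi}_2$ and of $\overline{\xi}_{2,0}$ have the same shape and project to the same element of $\ddot{E}_0(A(1)^*)$, the computation is word-for-word identical for the two spectral sequences, and all remaining values of $d_1$ follow from these formulas together with the graded Leibniz rule.
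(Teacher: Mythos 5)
Your proposal is correct and follows essentially the same route as the paper's own proof: lift the cocycle representatives from Proposition~\ref{SSs and cocycle reps}, apply the cyclic cobar differential, and identify the resulting cocycles as products, using the key fact that $\overline{\xi}_{1,1}=\overline{\xi}_{1,0}^2$ in the associated graded of the abelianizing filtration. Your write-up is slightly more explicit than the paper's about the dual Alexander--Whitney product identification, but the argument is the same.
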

\begin{proof}
In Proposition~\ref{SSs and cocycle reps} we gave cocycle representatives for the six multiplicative generators.
We then easily compute the $d_1$ differentials on those generators using the method described in Proposition~\ref{may ss for dual hh}:
\begin{align*}
 d(\overline{\xi}_{1,0}) &= 0 ,\\
 d(\overline{\xi}_{2,0}) &= \overline{\xi}_{1,0}\otimes \overline{\xi}_{1,1} + \overline{\xi}_{1,1}\otimes \overline{\xi}_{1,0} \\
                        &    + 1\otimes \overline{\xi}_{2,0} + \overline{\xi}_{2,0}\otimes 1 \\
                        &    + \overline{\xi}_{2,0}\otimes 1 + 1\otimes \overline{\xi}_{2,0} \\
                        &= \overline{\xi}_{1,0}\otimes \overline{\xi}_{1,1} + \overline{\xi}_{1,1}\otimes \overline{\xi}_{1,0} \\
 d(1\otimes \overline{\xi}_{1,0}) &= 0 ,\\
 d(1\otimes \overline{\xi}_{1,1}) &= 0 ,\\
 d(1\otimes \overline{\xi}_{2,0}) &= 1\otimes 1\otimes \overline{\xi}_{2,0} + 1\otimes \overline{\xi}_{2,0} \otimes 1 \\
                        &    + 1\otimes \overline{\xi}_{1,0}\otimes \overline{\xi}_{1,1} + 1\otimes 1\otimes \overline{\xi}_{2,0} \\
                        &    + 1\otimes \overline{\xi}_{2,0}\otimes 1 \\
                        &=     1\otimes \overline{\xi}_{1,0}\otimes \overline{\xi}_{1,1}. \end{align*}
Here $\overline{\xi}_{1,1}$ is the dual basis element to $\Sq^2$, so $\overline{\xi}_{1,1} = \overline{\xi}_{1,0}^2$ in the associated graded of the abelianizing filtration, but $\overline{\xi}_{1,1}$ is indecomposable in the associated graded of the May filtration.
Using the product on dual Hochschild cohomology from Proposition~\ref{may ss for dual hh}, we get that these cocycles
represent the cohomology classes $0, 0, x_{10}h_{11} + x_{10}^2h_{10}, 0, h_{10}h_{11}$, respectively.
\end{proof}

Now one has enough information to do a routine computation of the cohomology of the $E_1$-term, and get the $E_2$-term.
While $h_{20}$ is not a cocycle in the $E_1$-term, its square is, and we follow the traditional (due to May's thesis) 
notational conventions of May spectral sequences by writing $b_{20}$ for $h_{20}^2$.

We will present the $E_2$-term as a spectral sequence chart.
\begin{convention}\label{conventions on ss charts}
In all the spectral sequence charts in this paper,
\begin{itemize}
\item the vertical axis is the homological degree $s$,
\item the horizontal axis is the Adams degree $u-s$, i.e., the internal/topological degree $u$ minus the homological degree $s$,
\item horizontal lines (whether curved or straight) represent multiplication by $x_{10}$, 
\item vertical lines represent multiplication by $h_{10}$, and
\item diagonal lines represent multiplication by $h_{11}$.
\end{itemize}
\end{convention}

Here is a spectral sequence chart illustrating the $E_2$-term of the abelianizing and abelianizing-to-HH-May spectral sequences (their $E_2$-terms are abstractly isomorphic as bigraded $\mathbb{F}_2$-vector spaces, if one forgets about the filtration degree and only keeps track of the cohomological and Adams degrees), 
reduced modulo the ideal generated by $b_{20}$:

\begin{equation}\begin{sseq}[grid=none,entrysize=10mm,labelstep=1]{0...10}{0...4} \label{ss e2}
 \ssmoveto 0 0 
 \ssdropbull
 \ssname{1}

 \ssmove 1 0
 \ssdropbull 
 \ssname{x10} 

 \ssmove 1 0
 \ssdropbull
 \ssname{x11}

 \ssmove 1 0
 \ssdropbull
 \ssname{x10x11}

 \ssmove 3 0
 \ssdropbull
 \ssname{x6}

 \ssmove{-6}{1} 
 \ssdropbull
 \ssname{h10}

 \ssmove 1 0
 \ssdropbull 
 \ssname{x10h10} 
 \ssdropbull
 \ssname{h11}

 \ssmove 1 0
 \ssdropbull
 \ssname{x11h10}

 \ssmove 1 0
 \ssdropbull
 \ssname{x11h11}

 \ssmove 2 0
 \ssdropbull
 \ssname{z}

 \ssmove 1 0
 \ssdropbull
 \ssname{h10x6}

 \ssmove 1 0
 \ssdropbull
 \ssname{h11x6}

 \ssmove{-7}{1}
 \ssdropbull
 \ssname{h10^2}
 
 \ssmove 1 0
 \ssdropbull
 \ssname{x10h10^2}

 \ssmove 1 0
 \ssdropbull
 \ssname{h11^2}

 \ssmove 3 0
 \ssdropbull
 \ssname{h10z}

 \ssmove 1 0
 \ssdropbull
 \ssname{h10^2x6}

 \ssmove 2 0
 \ssdropbull
 \ssname{h11^2x6}


 \ssmove{-8}{1}
 \ssdropbull
 \ssname{h10^3}

 \ssmove 1 0
 \ssdropbull
 \ssname{x10h10^3}

 \ssmove 2 0
 \ssdropbull
 \ssname{h11^3}

 \ssmove 2 0
 \ssdropbull
 \ssname{h10^2z}

 \ssmove 1 0
 \ssdropbull
 \ssname{h10^3x6}

 \ssmove 3 0
 \ssdropbull
 \ssname{h11^3x6}
 
 \ssgoto{1}
 \ssgoto{x10}
 \ssstroke
 \ssgoto{x10}
 \ssgoto{x11}
 \ssstroke
 \ssgoto{x11}
 \ssgoto{x10x11}
 \ssstroke
 \ssgoto{1}
 \ssgoto{h10}
 \ssstroke
 \ssgoto{1}
 \ssgoto{h11}
 \ssstroke

 \ssgoto{x10}
 \ssgoto{x10h10}
 \ssstroke
 \ssgoto{x10}
 \ssgoto{x11h10}
 \ssstroke

 \ssgoto{x11}
 \ssgoto{x11h10}
 \ssstroke
 \ssgoto{x11}
 \ssgoto{x11h11}
 \ssstroke
 \ssgoto{x10x11}
 \ssgoto{x11h11}
 \ssstroke

 \ssgoto{x6}
 \ssgoto{h10x6}
 \ssstroke
 \ssgoto{x6}
 \ssgoto{h11x6}
 \ssstroke
 \ssgoto{h11x6}
 \ssgoto{h11^2x6}
 \ssstroke
 \ssgoto{h11^2x6}
 \ssgoto{h11^3x6}
 \ssstroke
 \ssgoto{h11^3x6}
 \ssmove 1 1
 \ssstroke[void,arrowto]

 \ssgoto{h10}
 \ssgoto{h10^2}
 \ssstroke
 \ssgoto{h10}
 \ssgoto{x10h10}
 \ssstroke
 \ssgoto{x10h10}
 \ssgoto{x11h10}
 \ssstroke[curve=.1]

 \ssgoto{x10h10}
 \ssgoto{x10h10^2}
 \ssstroke

 \ssgoto{h11}
 \ssgoto{x11h10}
 \ssstroke
 \ssgoto{x11h10}
 \ssgoto{x11h11}
 \ssstroke
 \ssgoto{h11}
 \ssgoto{h11^2}
 \ssstroke

 \ssgoto{h10^2}
 \ssgoto{h10^3}
 \ssstroke
 \ssgoto{h10^2}
 \ssgoto{x10h10^2}
 \ssstroke

 \ssgoto{x10h10^2}
 \ssgoto{x10h10^3}
 \ssstroke

 \ssgoto{h11^2}
 \ssgoto{h11^3}
 \ssstroke

 \ssgoto{h10^3}
 \ssgoto{x10h10^3}
 \ssstroke

 \ssgoto{z}
 \ssgoto{h10x6}
 \ssstroke
 \ssgoto{h10z}
 \ssgoto{h10^2x6}
 \ssstroke
 \ssgoto{h10^2z}
 \ssgoto{h10^3x6}
 \ssstroke

 \ssgoto{z}
 \ssgoto{h10z}
 \ssstroke
 \ssgoto{h10z}
 \ssgoto{h10^2z}
 \ssstroke
 \ssgoto{h10^2z}
 \ssmove 0 1
 \ssstroke[void,arrowto]

 \ssgoto{h10x6}
 \ssgoto{h10^2x6}
 \ssstroke
 \ssgoto{h10^2x6}
 \ssgoto{h10^3x6}
 \ssstroke
 \ssgoto{h10^3x6}
 \ssmove 0 1
 \ssstroke[void,arrowto]

 \ssgoto{h10^3}
 \ssmove 0 1
 \ssstroke[void,arrowto]

 \ssgoto{x10h10^3}
 \ssmove 0 1
 \ssstroke[void,arrowto]

 \ssgoto{h11^3}
 \ssmove 1 1
 \ssstroke[void,arrowto]

\end{sseq}\end{equation}

The classes whose names are not implied by the lines representing various multiplications are as follows:
\begin{itemize}
\item the class in bidegree $(s=1,u-s=5)$ is 
$x_{10}^2(h_{10}x_{20} + x_{10}h_{20})$, which we abbreviate as $z$,
\item and the class in bidegree $(s=0,u-s=6)$ is $x_{10}^3x_{20}$, which we abbreviate as $x_6$.
\end{itemize}

The spectral sequence's $E_2$-term is $b_{20}$-periodic, that is, there exists a class (not pictured) $b_{20}$ in bidegree $(s=2,u-s=4)$ each of whose positive integer
powers generates an isomorphic copy of the chart~\ref{ss e2}.

Consequently, as a trigraded $\mathbb{F}_2$-algebra, the spectral sequence's $E_2$-term 
is isomorphic to:
\begin{align*} \mathbb{F}_2[x_{10},h_{10},h_{11},z,x_6,b_{20}] &\mbox{\ modulo relations\ } x_{10}^4,x_{10}h_{11}=x_{10}^2h_{10}, \\
 & h_{10}h_{11},x_{10}z = h_{10}x_6, h_{11}z,z^2, \\
 & x_{10}x_6, zx_6,x_6^2,\end{align*}
with generators in tridegrees:

\begin{tabular}{c|c|c|c|c}
\mbox{Class} & \mbox{Coh. degree $s$} & \mbox{Ab. degree $t$} & \mbox{Top. degree $u$} & \mbox{Adams degree $u-s$} \\
$x_{10}$ & 0 & 1 & 1 & 1 \\
  $x_6$ & 0 & 7 & 6 & 6 \\
$h_{10}$ & 1 & 1 & 1 & 0 \\
$h_{11}$ & 1 & 2 & 2 & 1 \\
    $z$ & 1 & 7 & 6 & 5 \\
$b_{20}$ & 2 & 8 & 6 & 4  
\end{tabular}

\subsection{$d_2$-differentials.}

\begin{prop}
The abelianizing-to-HH-May spectral sequence collapses at $E_2$, i.e., there are no nonzero differentials longer than $d_1$ differentials.
Consequently, the spectral sequence chart~\ref{ss e2} describes the $E_1$-term (and also the $E_2$-term) of the HH-May spectral sequence, as well as
the $E_2$-term of the abelianizing spectral sequence.
\end{prop}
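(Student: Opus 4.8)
The plan is to prove collapse without computing a single higher differential, by observing that we already know the abutment exactly and then matching dimensions. First I would pin down the abutment: the abelianizing-to-HH-May spectral sequence converges to $coHH^s(\dot{E}^0(A(1)^*),\dot{E}^0(A(1)^*))$, and by Proposition~\ref{duality between hh and cohh} this is the $\mathbb{F}_2$-linear dual of $HH_s(\dot{E}_0A(1),\dot{E}_0A(1))$. Since $\dot{E}_0A(1)\cong\mathbb{F}_2[D_8]$ (Proposition~\ref{a(1) and dihedral group}), Theorem~\ref{burghelea thm} and the dimension count in Corollary~\ref{assoc gr dimension count} give $\dim_{\mathbb{F}_2} coHH^s(\dot{E}^0(A(1)^*),\dot{E}^0(A(1)^*)) = 3s+5$ for every $s$.

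Next I would compute the total $\mathbb{F}_2$-dimension of the $E_2$-term in each cohomological degree and show it equals $3s+5$ as well. The $E_2$-term is free over $\mathbb{F}_2[b_{20}]$ on the classes displayed in chart~\ref{ss e2}, where one must remember that the upward vertical lines and upward diagonal lines carrying the terminal arrows indicate the infinite $h_{10}$-towers (on $1,x_{10},z,x_6$) and $h_{11}$-towers (on $h_{11},h_{11}x_6$) continuing without bound; the chart is \emph{not} a finite object. Reading off the fundamental domain $E_2/(b_{20})$ this way, its total dimension $c_j$ in cohomological degree $j$ is $c_0=5$, $c_1=8$, and $c_j=6$ for all $j\ge 2$. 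Because $b_{20}$ sits in cohomological degree $2$, freeness over $\mathbb{F}_2[b_{20}]$ gives $\dim_{\mathbb{F}_2} E_2^{s} = \sum_{k\ge 0} c_{s-2k}$, and this telescopes to exactly $3s+5$ in every degree (for $s=2m$ the sum is $5+6m$, and for $s=2m+1$ it is $8+6m$).

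Finally I would invoke the standard rank argument. The spectral sequence is strongly convergent by Proposition~\ref{SSs and cocycle reps}, so $\sum_{t,u}\dim_{\mathbb{F}_2} E_\infty^{s,t,u} = 3s+5$ for each $s$. On the other hand every differential $d_r$ with $r\ge 2$ can only decrease dimensions (it kills its image in cohomological degree $s{+}1$ and its domain in degree $s$), so $\dim_{\mathbb{F}_2} E_\infty^{s}\le \dim_{\mathbb{F}_2} E_2^{s}$ for each $s$. Having shown both sides equal $3s+5$, we get equality in every cohomological degree, which forces every $d_r$, $r\ge 2$, to vanish; hence the spectral sequence collapses at $E_2$.

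The only genuinely delicate point, and the one I expect to require care, is the $E_2$-dimension bookkeeping: the match with $3s+5$ depends on correctly combining the $b_{20}$-periodicity with the infinite $h_{10}$- and $h_{11}$-towers, since it is precisely their interaction that produces the linear growth agreeing with the group-ring computation of the abutment. Everything else in the argument is formal.
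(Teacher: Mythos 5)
Your proposal is correct and is essentially the paper's own argument: the paper likewise observes that the $s$-row of the $E_2$-chart has dimension $3s+5$, matches this against the abutment dimension coming from Proposition~\ref{duality between hh and cohh} and Corollary~\ref{assoc gr dimension count}, and concludes that no further differentials can occur. Your version merely spells out the $b_{20}$-freeness and tower bookkeeping that the paper compresses into ``an easy dimension count,'' and that bookkeeping checks out.
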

\begin{proof}
An easy dimension count on the $E_2$-term~\ref{ss e2} gives us that the $\mathbb{F}_2$-vector space dimension of the $s$-row is
$3s+5$. By Proposition~\ref{duality between hh and cohh} and 
Corollary~\ref{assoc gr dimension count}, this is the correct dimension for the $E_{\infty}$-term. So there can be no further nonzero differentials
in the spectral sequence, since any such differentials would reduce the $\mathbb{F}_2$-vector space dimension of some row.
\end{proof}

\begin{prop}\label{d2 differential computation}
The $d_2$ differentials on the multiplicative generators of the $E_2$-term of the HH-May spectral sequence, as well as the abelianizing spectral sequence, are as follows:
\begin{align*}
 d_2(x_{10}) &= 0, \\
 d_2(h_{10}) &= 0, \\
 d_2(h_{11}) &= 0, \\
 d_2(z) &= 0, \\
 d_2(x_{6}) &= 0,\mbox{\ and} \\
 d_2(b_{20}) &= h_{11}^3. \end{align*}
Using these formulas and the Leibniz rule, we get the $d_2$ differential on all elements
of the $E_2$-term of the abelianizing spectral sequence.
\end{prop}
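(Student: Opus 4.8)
The plan is to split the multiplicative generators into those whose $d_2$ vanishes for tridegree reasons and the single generator $b_{20}$, whose differential requires genuine computation, and then to extend to all of $E_2$ by the Leibniz rule, using that both spectral sequences are multiplicative (Proposition~\ref{may ss for dual hh}). Recall that $d_2$ raises cohomological degree by one, lowers filtration degree by two, and preserves internal degree. For $x_{10}$ and $h_{10}$, each of filtration degree one, the target has negative filtration degree, so $d_2$ vanishes; for $h_{11}$ the target has filtration degree zero but positive internal degree, which is impossible since the filtration-zero part of the associated graded coalgebra is spanned by the unit. For $z$ and $x_6$, comparing their tridegrees against chart~\ref{ss e2} and the accompanying table shows that the only classes sharing their cohomological and internal degrees ($b_{20}$ and $z$, respectively) sit in the wrong filtration degree, so here too the target is zero. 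The same bookkeeping pins down $d_2(b_{20})$: its target tridegree contains the single nonzero class $h_{11}^3$, so $d_2(b_{20}) \in \{0, h_{11}^3\}$.

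The heart of the matter is to decide between these two possibilities. For the HH-May spectral sequence I would argue by naturality. By Proposition~\ref{ss comparison with supertrivial coeffs} the HH-May spectral sequence with coefficients in $\mathbb{F}_2$ is the classical May spectral sequence converging to $\Ext_{A(1)}(\mathbb{F}_2,\mathbb{F}_2)$, in which $d_2(b_{20}) = h_{11}^3$ is the well-known differential producing the relation $h_{11}^3 = 0$ in $\Ext_{A(1)}(\mathbb{F}_2,\mathbb{F}_2)$; the second summand of the corresponding Steenrod-algebra differential, namely $h_{10}^2$ times the class of $\overline{\xi}_1^4$, is absent here because $\overline{\xi}_1^4 = 0$ in $A(1)^*$. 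The morphism of spectral sequences of Proposition~\ref{SSs and cocycle reps} sends $b_{20} \mapsto b_{20}$ and $h_{11}^3 \mapsto h_{11}^3 \neq 0$, so by naturality $d_2(b_{20})$ cannot be zero, forcing $d_2(b_{20}) = h_{11}^3$.

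To settle the abelianizing spectral sequence (and as an alternative route for the HH-May spectral sequence) I would instead compute $d_2(b_{20})$ directly on cochains, following property~\ref{computability property} of Proposition~\ref{may ss for dual hh}. Here $\overline{\xi}_{2,0}$ is primitive in the associated graded, so $1[\overline{\xi}_{2,0} \otimes \overline{\xi}_{2,0}]$ is a cocycle representing $b_{20} = h_{20}^2$; I lift it to $\tilde{y} = 1[\overline{\xi}_2 \otimes \overline{\xi}_2]$ in the cyclic cobar complex of $A(1)^*$ and apply the differential. Using the coproduct $\Delta(\overline{\xi}_2) = \overline{\xi}_2 \otimes 1 + \overline{\xi}_1 \otimes \overline{\xi}_1^2 + 1 \otimes \overline{\xi}_2$ of Proposition~\ref{assoc graded of may filt}, one finds $d\tilde{y} = 1[\overline{\xi}_1 \otimes \overline{\xi}_1^2 \otimes \overline{\xi}_2] + 1[\overline{\xi}_2 \otimes \overline{\xi}_1 \otimes \overline{\xi}_1^2]$, which drops filtration by only one; its image in the associated graded therefore computes $d_1(b_{20})$, and is a coboundary because $d_1(h_{20}^2) = 0$ by the Leibniz rule in characteristic two. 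Correcting $\tilde{y}$ by a lift of the associated-graded cochain bounding this image and re-applying the differential, the filtration-six part of the result is a cocycle representing $h_{11}^3 = 1[\overline{\xi}_1^2 \otimes \overline{\xi}_1^2 \otimes \overline{\xi}_1^2]$. Since $b_{20}$ and $h_{11}^3$ lie in abelianizing filtration degrees eight and six (the table following chart~\ref{ss e2}), this is a genuine $d_2$, giving $d_2(b_{20}) = h_{11}^3$.

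I expect the main obstacle to be precisely this cochain computation: since $b_{20} = h_{20}^2$ is the square of a class that is not a cocycle ($d_1(h_{20}) = h_{10}h_{11}$ by Proposition~\ref{d1 diffs}), its $d_2$ is a secondary, Massey-product-type operation, and carrying it out honestly requires a judicious choice of the correcting cochain together with careful tracking of the filtration of every term; the naturality argument against the classical May spectral sequence is attractive because it circumvents this bookkeeping for the HH-May spectral sequence. Once $d_2(b_{20}) = h_{11}^3$ is in hand, the Leibniz rule and multiplicativity propagate the differential to the remainder of the $E_2$-term, giving the stated formulas.
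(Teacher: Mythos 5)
Your proposal is correct and its decisive step is exactly the paper's: after tridegree bookkeeping reduces everything to whether $d_2(b_{20})$ is $0$ or $h_{11}^3$, you invoke the comparison of Proposition~\ref{ss comparison with supertrivial coeffs} with the classical May spectral sequence, where $d_2(b_{20})=h_{11}^3$ is standard. The only (immaterial) differences are that the paper disposes of $x_{10},h_{10},h_{11}$ by reusing the explicit cocycle lifts of Proposition~\ref{d1 diffs} (whose coboundaries vanish identically, making them permanent cycles) rather than by filtration-degree estimates, and that your supplementary direct cochain computation of the secondary differential on $b_{20}=h_{20}^2$ — which you rightly flag as the delicate part and do not complete — does not appear in the paper and is not needed once the naturality argument is in place.
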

\begin{proof}
For $x_{10},h_{10},$ and $h_{11}$, this is simply the same computation as Proposition~\ref{d1 diffs}. The point is that,
\begin{itemize}
\item when we take a cocycle representative for any of these three cohomology classes in the Cartier cochain complex for ${}^{Aug}{E}_0A(1)$, 
\item then regard that cocycle as a cochain in the Cartier cochain complex for $A(1)$, 
\item the resulting cochain is still a cocycle.
\end{itemize}
Consequently the differential $d_r$ vanishes, for all $r$, on each of the three cohomology classes $x_{10},h_{10},$ and $h_{11}$.

For $z$ and $x_6$, inspection of the tridegrees of elements rules out all nonzero possibilities for $d_2$. 
For the differential $d_2(b_{20})$: we see from inspection of the tridegrees that the only possible nonzero differential on $b_{20}$ would
have to hit a scalar multiple of $h_{11}^3$, and this differential indeed occurs, using Proposition~\ref{ss comparison with supertrivial coeffs} to 
map the HH-May spectral sequence to the classical May spectral sequence for $A(1)$, in which the differential $d_2(b_{20}) = h_{11}^3$ is
classical and well-known (see e.g. Lemma~3.2.10 of~\cite{MR860042}).
\end{proof}

So the only nonzero $d_2$ differentials are  the $d_2$-differential $d_2(b_{20}) = h_{11}^3$ and its products
with other classes. By the Leibniz rule, $d_2(b_{20}^2) = 0$, so the spectral sequence's $E_3$-term is $b_{20}^2$-periodic. We now draw a chart illustrating
the $E_3$-term, modulo the two-sided ideal generated by $b_{20}^2$:

\begin{equation}\begin{sseq}[grid=none,entrysize=10mm,labelstep=1]{0...11}{0...5}\label{ss e3}
 \ssmoveto 0 0 
 \ssdropbull
 \ssname{1}

 \ssmove 1 0
 \ssdropbull 
 \ssname{x10} 

 \ssmove 1 0
 \ssdropbull
 \ssname{x11}

 \ssmove 1 0
 \ssdropbull
 \ssname{x10x11}

 \ssmove 3 0
 \ssdropbull
 \ssname{x6}

 \ssmove{-6}{1} 
 \ssdropbull
 \ssname{h10}

 \ssmove 1 0
 \ssdropbull 
 \ssname{x10h10} 
 \ssdropbull
 \ssname{h11}

 \ssmove 1 0
 \ssdropbull
 \ssname{x11h10}

 \ssmove 1 0
 \ssdropbull
 \ssname{x11h11}

 \ssmove 2 0
 \ssdropbull
 \ssname{z}

 \ssmove 1 0
 \ssdropbull
 \ssname{h10x6}

 \ssmove 1 0
 \ssdropbull
 \ssname{h11x6}

 \ssmove{-7}{1}
 \ssdropbull
 \ssname{h10^2}
 
 \ssmove 1 0
 \ssdropbull
 \ssname{x10h10^2}

 \ssmove 1 0
 \ssdropbull
 \ssname{h11^2}

 \ssmove 3 0
 \ssdropbull
 \ssname{h10z}
 \ssdropbull
 \ssname{x10b20}

 \ssmove 1 0
 \ssdropbull
 \ssname{h10^2x6}
 \ssdropbull
 \ssname{x10^2b20}

 \ssmove 1 0
 \ssdropbull
 \ssname{x10^3b20}

 \ssmove 1 0
 \ssdropbull
 \ssname{h11^2x6}


 \ssmove{-8}{1}
 \ssdropbull
 \ssname{h10^3}

 \ssmove 1 0
 \ssdropbull
 \ssname{x10h10^3}

 \ssmove 3 0
 \ssdropbull
 \ssname{h10b20}

 \ssmove 1 0 
 \ssdropbull
 \ssname{h10^2z}
 \ssdropbull
 \ssname{x10h10b20}

 \ssmove 1 0
 \ssdropbull
 \ssname{h10^3x6}
 \ssdropbull
 \ssname{x10^2h10b20}

 \ssmove 1 0
 \ssdropbull
 \ssname{x10^3h10b20}

 \ssmove 2 0
 \ssdropbull
 \ssname{h10b20z}

 \ssmove 1 0
 \ssdropbull
 \ssname{x10h10b20z}

 \ssmove{-10}{1}
 \ssdropbull
 \ssname{h10^4}

 \ssmove 1 0
 \ssdropbull
 \ssname{x10h10^4}

 \ssmove 3 0
 \ssdropbull
 \ssname{h10^2b20}

 \ssmove 1 0
 \ssdropbull
 \ssname{h10^3z}
 \ssdropbull
 \ssname{x10h10^2b20}

 \ssmove 1 0
 \ssdropbull
 \ssname{h10^4x6}

 \ssmove 3 0
 \ssdropbull
 \ssname{h10^2b20z}

 \ssmove 1 0
 \ssdropbull
 \ssname{x10h10^2b20z}

 \ssgoto{1}
 \ssgoto{x10}
 \ssstroke
 \ssgoto{x10}
 \ssgoto{x11}
 \ssstroke
 \ssgoto{x11}
 \ssgoto{x10x11}
 \ssstroke
 \ssgoto{1}
 \ssgoto{h10}
 \ssstroke
 \ssgoto{1}
 \ssgoto{h11}
 \ssstroke

 \ssgoto{x10}
 \ssgoto{x10h10}
 \ssstroke
 \ssgoto{x10}
 \ssgoto{x11h10}
 \ssstroke

 \ssgoto{x11}
 \ssgoto{x11h10}
 \ssstroke
 \ssgoto{x11}
 \ssgoto{x11h11}
 \ssstroke
 \ssgoto{x10x11}
 \ssgoto{x11h11}
 \ssstroke

 \ssgoto{x6}
 \ssgoto{h10x6}
 \ssstroke
 \ssgoto{x6}
 \ssgoto{h11x6}
 \ssstroke
 \ssgoto{h11x6}
 \ssgoto{h11^2x6}
 \ssstroke

 \ssgoto{h10}
 \ssgoto{h10^2}
 \ssstroke
 \ssgoto{h10}
 \ssgoto{x10h10}
 \ssstroke
 \ssgoto{x10h10}
 \ssgoto{x11h10}
 \ssstroke[curve=.1]

 \ssgoto{x10h10}
 \ssgoto{x10h10^2}
 \ssstroke

 \ssgoto{h11}
 \ssgoto{x11h10}
 \ssstroke
 \ssgoto{x11h10}
 \ssgoto{x11h11}
 \ssstroke
 \ssgoto{h11}
 \ssgoto{h11^2}
 \ssstroke

 \ssgoto{h10^2}
 \ssgoto{h10^3}
 \ssstroke
 \ssgoto{h10^3}
 \ssgoto{h10^4}
 \ssstroke
 \ssgoto{h10^2}
 \ssgoto{x10h10^2}
 \ssstroke
 \ssgoto{h10^3}
 \ssgoto{x10h10^3}
 \ssstroke

 \ssgoto{x10h10^2}
 \ssgoto{x10h10^3}
 \ssstroke
 \ssgoto{x10h10^3}
 \ssgoto{x10h10^4}
 \ssstroke

 \ssgoto{h10^3}
 \ssgoto{x10h10^3}
 \ssstroke
 \ssgoto{h10^4}
 \ssgoto{x10h10^4}
 \ssstroke

 \ssgoto{z}
 \ssgoto{h10x6}
 \ssstroke
 \ssgoto{h10z}
 \ssgoto{h10^2x6}
 \ssstroke[curve=-.1]
 \ssgoto{h10^2z}
 \ssgoto{h10^3x6}
 \ssstroke[curve=-.1]
 \ssgoto{h10^3z}
 \ssgoto{h10^4x6}
 \ssstroke[curve=-.1]

 \ssgoto{z}
 \ssgoto{h10z}
 \ssstroke
 \ssgoto{h10z}
 \ssgoto{h10^2z}
 \ssstroke
 \ssgoto{h10^2z}
 \ssgoto{h10^3z}
 \ssstroke

 \ssgoto{h10x6}
 \ssgoto{h10^2x6}
 \ssstroke
 \ssgoto{h10^2x6}
 \ssgoto{h10^3x6}
 \ssstroke
 \ssgoto{h10^3x6}
 \ssgoto{h10^4x6}
 \ssstroke

 \ssgoto{h10b20}
 \ssgoto{x10h10b20}
 \ssstroke[curve=.1]
 \ssgoto{x10h10b20}
 \ssgoto{x10^2h10b20}
 \ssstroke[curve=.1]
 \ssgoto{x10^2h10b20}
 \ssgoto{x10^3h10b20}
 \ssstroke
 \ssgoto{x10b20}
 \ssgoto{x10^2b20}
 \ssstroke[curve=.1]
 \ssgoto{x10^2b20}
 \ssgoto{x10^3b20}
 \ssstroke
 \ssgoto{h10^2b20}
 \ssgoto{x10h10^2b20}
 \ssstroke[curve=.1]
 \ssgoto{h10b20z}
 \ssgoto{x10h10b20z}
 \ssstroke
 \ssgoto{h10^2b20z}
 \ssgoto{x10h10^2b20z}
 \ssstroke

 \ssgoto{h10b20}
 \ssgoto{h10^2b20}
 \ssstroke
 \ssgoto{x10h10b20}
 \ssgoto{x10h10^2b20}
 \ssstroke
 \ssgoto{x10b20}
 \ssgoto{x10h10b20}
 \ssstroke
 \ssgoto{x10^2b20}
 \ssgoto{x10^2h10b20}
 \ssstroke
 \ssgoto{x10^3b20}
 \ssgoto{x10^3h10b20}
 \ssstroke
 \ssgoto{h10b20z}
 \ssgoto{h10^2b20z}
 \ssstroke
 \ssgoto{x10h10b20z}
 \ssgoto{x10h10^2b20z}
 \ssstroke

 \ssgoto{x10b20}
 \ssgoto{x10^2h10b20}
 \ssstroke
 \ssgoto{x10^2b20}
 \ssgoto{x10^3h10b20}
 \ssstroke

 \ssgoto{h10^4}
 \ssmove 0 1
 \ssstroke[void,arrowto]
 \ssgoto{x10h10^4}
 \ssmove 0 1
 \ssstroke[void,arrowto]

 \ssgoto{h10^3z}
 \ssmove 0 1
 \ssstroke[void,arrowto]
 \ssgoto{h10^4x6}
 \ssmove 0 1
 \ssstroke[void,arrowto]
 \ssgoto{h10^2b20}
 \ssmove 0 1
 \ssstroke[void,arrowto]
 \ssgoto{x10h10^2b20}
 \ssmove 0 1
 \ssstroke[void,arrowto]
 \ssgoto{h10^2b20z}
 \ssmove 0 1
 \ssstroke[void,arrowto]
 \ssgoto{x10h10^2b20z}
 \ssmove 0 1
 \ssstroke[void,arrowto]

\end{sseq}\end{equation}

The entire pattern described by the chart~\ref{ss e3} repeats: there is the periodicity class (not pictured) $b_{20}^2$ in bidegree $(s=4,u-s=8)$, which maps, under the 
map of spectral sequences of Proposition~\ref{ss comparison with supertrivial coeffs}, to the element in $\Ext^{4,12}_{A(1)}(\mathbb{F}_2,\mathbb{F}_2)$
which, in the Adams spectral sequence, detects the famous real Bott periodicity element in $\pi_8(ko)$.

The classes whose names are not implied by the lines representing various multiplications are as follows, and whose names were not already given in our description of the $E_2$-term, are as follows:
\begin{itemize}
\item the class in bidegree $(s=3,u-s=4)$ is $h_{10}b_{20}$, which we abbreviate as 
$w_4$,
\item the class in bidegree $(s=2,u-s=5)$ is $x_{10}b_{20}$, which we abbreviate as 
$w_5$,
\item and the class in bidegree $(s=3,u-s=9)$ is $zb_{20}$, which we abbreviate as
$w_9$.
\end{itemize}

Finally, we write $b$ for $b_{20}^2$, so that the spectral sequence's $E_3$-term 
is multiplicatively generated by elements:

\begin{tabular}{c|c|c|c|c}
\mbox{Class} & \mbox{Coh. degree $s$} & \mbox{Ab. degree $t$} & \mbox{Top. degree $u$} & \mbox{Adams degree $u-s$} \\
$x_{10}$ & 0 & 1 & 1 & 1 \\
  $x_6$ & 0 & 7 & 6 & 6 \\
$h_{10}$ & 1 & 1 & 1 & 0 \\
$h_{11}$ & 1 & 2 & 2 & 1 \\
    $z$ & 1 & 7 & 6 & 5 \\
  $w_4$ & 3 & 9 & 7 & 4 \\
  $w_5$ & 2 & 9 & 7 & 5 \\
  $w_9$ & 3 & 15 & 12 & 9 \\
$b$ & 4 & 16 & 12 & 8  .
\end{tabular}

In Proposition~\ref{ss comparison with supertrivial coeffs} we constructed a map from the HH-May spectral sequence to the classical May spectral sequence
computing $\Ext_{A(1)}^{*,*}(\mathbb{F}_2,\mathbb{F}_2)$. We now draw the $E_3\cong E_{\infty}$-term of that classical May spectral sequence,
using the same conventions as charts~\ref{ss e2} and~\ref{ss e3}, so that one can easily see the (surjective) map of spectral sequence $E_3$-terms:

\begin{equation}
\begin{sseq}[grid=none,entrysize=10mm,labelstep=1]{0...11}{0...5}\label{classical may e3}
 \ssmoveto 0 0 
 \ssdropbull
 \ssname{1}

 \ssmove{0}{1} 
 \ssdropbull
 \ssname{h10}

 \ssmove 1 0
 \ssdropbull
 \ssname{h11}

 \ssmove{-1}{1}
 \ssdropbull
 \ssname{h10^2}

 \ssmove 2 0
 \ssdropbull
 \ssname{h11^2}

 \ssmove{-2}{1}
 \ssdropbull
 \ssname{h10^3}

 \ssmove 4 0
 \ssdropbull
 \ssname{h10b20}

 \ssmove{-4}{1}
 \ssdropbull
 \ssname{h10^4}

 \ssmove 4 0
 \ssdropbull
 \ssname{h10^2b20}
 
 \ssgoto{1}
 \ssgoto{h10}
 \ssstroke
 \ssgoto{1}
 \ssgoto{h11}
 \ssstroke

 \ssgoto{h10}
 \ssgoto{h10^2}
 \ssstroke

 \ssgoto{h11}
 \ssgoto{h11^2}
 \ssstroke

 \ssgoto{h10^2}
 \ssgoto{h10^3}
 \ssstroke

 \ssgoto{h10^3}
 \ssgoto{h10^4}
 \ssstroke 

 \ssgoto{h10^3}
 \ssgoto{h10^4}
 \ssstroke

 \ssgoto{h10b20}
 \ssgoto{h10^2b20}
 \ssstroke

 \ssgoto{h10^4}
 \ssmove 0 1
 \ssstroke[void,arrowto]

 \ssgoto{h10^2b20}
 \ssmove 0 1
 \ssstroke[void,arrowto]

\end{sseq}
\end{equation}

Again, the periodicity class (not pictured) is $b=b_{20}$ in bidegree $(s=4,u-s=8)$.

\begin{prop}\label{no more diffs}
In the abelianizing and the HH-May spectral sequences, all $d_r$ differentials are zero, for all $r>2$.
\end{prop}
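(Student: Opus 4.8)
The plan is to prove collapse first for the abelianizing spectral sequence, where I can work entirely with the abelianizing trigrading recorded in the tables above, and then to transfer the conclusion to the HH-May spectral sequence using the fact that both converge to the same abutment $coHH^*(A(1)^*,A(1)^*)$ with abstractly isomorphic $E_3$-terms. Since both spectral sequences are multiplicative, each $d_r$ is a derivation and hence vanishes identically as soon as it vanishes on the multiplicative generators $x_{10},x_6,h_{10},h_{11},z,w_4,w_5,w_9,b$ of the $E_3$-term. So it suffices to show that each of these nine classes is a permanent cycle. The organizing observation is that a $d_r$ preserves the topological degree $u$, raises the cohomological degree $s$ by one, and strictly lowers the abelianizing degree $t$ by exactly $r\geq 3$; thus $d_r(g)$ must land in the chart position one row above $g$ in the same topological degree, and the two classes must differ by exactly $r$ in their $t$-degrees.

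For the abelianizing spectral sequence I would dispose of the generators by three elementary arguments. First, a sparseness (empty-target) argument handles $h_{10}$, $z$, $w_4$, and $b$: reading off chart~\ref{ss e3}, the target position one row up in the same topological degree is unoccupied (for $h_{10}$ it is Adams $-1$; for $z$ it is the Adams-$4$, topological-$6$ spot that held only $b_{20}$ at $E_2$ and is now dead; for $w_4$ it is Adams $3$ in cohomological degree $4$; and for $b$ the cohomological-$5$, topological-$12$ spot, which one checks is empty using the relations $x_{10}x_6=0$ and $x_{10}z=h_{10}x_6$ to kill every candidate monomial). These checks use only the bigraded vector-space structure, so they apply verbatim to both spectral sequences. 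Second, a filtration-degree collision handles $x_{10}$, $h_{11}$, $w_5$, and $w_9$: in each case the target position contains a single class ($h_{10}$, $h_{10}^2$, $w_4$, $b$ respectively) whose abelianizing degree is equal to (in the first three cases, all $r=0$) or strictly greater than (for $w_9\to b$, namely $15<16$) that of the source, so a degree-lowering $d_r$ with $r\geq 3$ is impossible. Third, $x_6$ has $z$ as its only possible target, sharing its abelianizing degree (again $r=0$); independently, the cocycle representative $\overline{\xi}_{1,0}\overline{\xi}_{1,1}\overline{\xi}_{2,0}$ is an honest cocycle in the full cyclic cobar complex (the computation sketched in the proof of Proposition~\ref{d2 differential computation}), making $x_6$ a permanent cycle on the nose in both spectral sequences. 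Together these show every generator is a permanent cycle, so the abelianizing spectral sequence collapses at $E_3$.

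To conclude for the HH-May spectral sequence I would argue by convergence. Both spectral sequences converge to $coHH^*(A(1)^*,A(1)^*)$, and their $E_3$-terms coincide as bigraded $\mathbb{F}_2$-vector spaces (chart~\ref{ss e3}). Collapse of the abelianizing spectral sequence identifies $\dim_{\mathbb{F}_2}coHH^n(A(1)^*,A(1)^*)$ with the dimension of the $n$th row of chart~\ref{ss e3}. Now the HH-May $E_\infty$-term is a subquotient of its $E_3$-term, so in each degree its dimension is at most the $E_3$-dimension, which equals the abutment dimension, which equals the $E_\infty$-dimension; equality is forced throughout, and therefore the HH-May spectral sequence also collapses at $E_3$. (Alternatively one could collapse the HH-May spectral sequence directly via the surjection to the classical May spectral sequence of Proposition~\ref{ss comparison with supertrivial coeffs}: the generators $h_{10},z,w_4,b$ die by the same empty-target argument, while for $x_{10},h_{11},w_5,w_9$ the unique target maps to a nonzero permanent cycle downstairs whereas the source maps to a class with zero differential, forcing the differential into the kernel and hence to zero, and $x_6$ is again handled by its cocycle representative; this route avoids computing May degrees but uses the comparison map.)

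The main obstacle is verifying the empty-target claims, and in particular ruling out hidden classes in the target positions for the Bott periodicity class $b$ and the top generator $w_9$: these are not visible by merely reading the chart and require carefully applying the multiplicative relations of the $E_3$-term. The only generator not eliminated by pure bookkeeping in the HH-May filtration is $x_6$, for which the explicit cocycle computation in the full complex is essential; fortunately that computation is already available from the $d_2$ analysis.
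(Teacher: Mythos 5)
Your proposal is correct and follows essentially the same route as the paper: reduce to the nine multiplicative generators via the Leibniz rule, then kill each candidate differential by inspecting tridegrees (your ``empty target'' and ``filtration-degree collision'' cases are just the two ways a class can fail to lie in the correct tridegree), with the explicit cocycle representative for $x_6$ available as backup. The only additions beyond the paper's argument are your more detailed case bookkeeping and the dimension-count transfer from the abelianizing to the HH-May spectral sequence, which the paper compresses into the remark that the two agree from $E_3$ onward.
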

\begin{proof}
We simply check that there can no nonzero $d_r$ differentials, for $r>2$, on the multiplicative generators
$x_{10},x_6,h_{10},h_{11},z,w_4,w_5,w_{9},b$ of the $E_3$-term of the abelianizing, equivalently (starting with $E_3$), the HH-May spectral sequence.
In the proof of Proposition~\ref{d2 differential computation}, we saw that $x_{10}, 
h_{10},h_{11},$ and $x_6$ all do not support nonzero differentials of any length
whatsoever: the first three by a cocycle-level calculation, and the last simply for degree reasons. The remaining generators of the $E_3$-term are all incapable of supporting nonzero $d_r$ differentials, for $r>2$, for degree reasons: there are no classes in the 
correct tridegree for any of these classes to hit by a $d_r$ differential, if $r>2$.
\end{proof}

\begin{theorem}\label{e-p series thm}
The spectral sequence chart~\ref{ss e3} displays (by reading across the rows) the Hochschild homology $HH_*(A(1),A(1))$.
In particular, the $\mathbb{F}_2$-vector space dimension of $HH_n(A(1),A(1))$ is:
\[ \dim_{\mathbb{F}_2} HH_n(A(1),A(1)) = \left\{ 
    \begin{array}{ll} 
      2n+5 &\mbox{\ if\ } 2\mid n \\
      2n+6 &\mbox{\ if\ } n\equiv 1 \mod 4 \\
      2n+4 &\mbox{\ if\ } n\equiv 3 \mod 4 .\end{array}\right. \]
Hence the Poincar\'{e} series of the graded $\mathbb{F}_2$-vector space $HH_*(A(1),A(1))$ is
\[ \frac{ 5 + 8s + 9s^2 + 10s^3 + \frac{8s^4}{1-s}}{1-s^4} .\]

If we additionally keep track of the extra grading on $HH_*(A(1),A(1))$ coming from the topological grading on $A(1)$, then 
the Poincar\'{e} series of the bigraded $\mathbb{F}_2$-vector space $HH_{*,*}(A(1),A(1))$ is
\begin{dmath}\label{e-p series} \left( (1+u)\left(1+u^2 + su(1+u^2+u^5) + s^2u^2(1+2u^5+u^7) + s^3u^3(1+u^4+u^5+u^6+u^9) + \frac{s^4u^4(1+u^4+u^5+u^9)}{1-su}\right) + u^6+su^2+su^8+s^2u^4\right)\frac{1}{1-s^4u^{12}}\end{dmath}
where $s$ indexes the homological grading and $u$ indexes the topological grading, as in Proposition~\ref{SSs and cocycle reps}.
\end{theorem}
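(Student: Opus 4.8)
The plan is to deduce every numerical assertion from the collapse of the spectral sequences at $E_3$ (established in the preceding propositions), so that $E_3\cong E_\infty$ for both the abelianizing and the HH-May spectral sequences; the key point is that, because everything takes place over a field, the answer is insensitive to the extension problems a spectral sequence usually leaves open. Concretely: the abelianizing filtration is finite by Proposition~\ref{assoc graded of abelianizing filt} (indeed $\ddot{F}^8 A(1)=0$), so property~\ref{strong convergence property} of Proposition~\ref{may ss for dual hh} gives strong convergence, and $E_\infty$ is the associated graded of a finite filtration on $coHH^*(A(1)^*,A(1)^*)$, while property~\ref{grading property} makes this a spectral sequence of graded $\mathbb{F}_2$-vector spaces, so the topological grading $u$ is preserved. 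Passage to the associated graded preserves $\mathbb{F}_2$-dimension in each bidegree, whence
\[ \dim_{\mathbb{F}_2} coHH^{s}(A(1)^*,A(1)^*)_u = \sum_{t}\dim_{\mathbb{F}_2} E_3^{s,t,u}, \]
and isomorphism~\ref{iso 7} of Proposition~\ref{duality between hh and cohh}, being a degreewise $\mathbb{F}_2$-linear duality, identifies the left-hand side with $\dim_{\mathbb{F}_2} HH_{s}(A(1),A(1))_u$. This already proves the first assertion --- that reading the $s$-th row of chart~\ref{ss e3}, together with its $b$-multiples, computes $HH_s(A(1),A(1))$ --- and reduces the remaining assertions to a count of the $E_3$-page.

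For that count I would use the explicit trigraded multiplicative description of $E_3$ in terms of the generators $x_{10},x_6,h_{10},h_{11},z,w_4,w_5,w_9,b$ and their listed relations. Write $c_n$ for the $\mathbb{F}_2$-dimension of the $n$-th row of the reduced chart, i.e. of $E_3/(b)$ in homological degree $n$; these are visible in chart~\ref{ss e3} as $c_0=5,\ c_1=8,\ c_2=9,\ c_3=10$, and $c_n=8$ for all $n\geq 4$, the stable value $8$ being accounted for by the eight $h_{10}$-towers (the eight upward arrows in the chart). Since $b$ has homological degree $4$ and acts freely, the $b$-periodicity of the $E_3$-page gives $E_3^{\,n}\cong\bigoplus_{k\geq 0} b^k\cdot\bigl(E_3/(b)\bigr)^{\,n-4k}$, so $\dim_{\mathbb{F}_2} HH_n(A(1),A(1)) = \sum_{k\geq 0} c_{n-4k}$. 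For $n=4q+r$ with $0\leq r\leq 3$ this telescopes to $8q+c_r = 2n-2r+c_r$, which equals $2n+5,\,2n+6,\,2n+5,\,2n+4$ according as $r=0,1,2,3$; this is exactly the claimed closed form. Passing to generating functions, the series of the $c_n$ is $5+8s+9s^2+10s^3+\frac{8s^4}{1-s}$, and convolving with the $b$-periodicity series $\frac{1}{1-s^4}$ yields the stated single-variable Poincar\'{e} series.

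The bigraded series~\ref{e-p series} is the same computation carried out while recording the topological degree $u$ of every basis monomial, and this bookkeeping is where essentially all the labor lies. Here the factor $\frac{1}{1-s^4u^{12}}$ is the contribution of the periodicity class $b$ (homological degree $4$, topological degree $12$), the denominators $\frac{1}{1-su}$ record the $h_{10}$-towers (homological degree $1$, topological degree $1$), the overall factor $(1+u)$ reflects the pairing $\{y,\,x_{10}y\}$ of most basis classes under multiplication by $x_{10}$ (topological degree $1$, homological degree $0$), and the additive correction $u^6+su^2+su^8+s^2u^4$ collects the classes (such as $x_6$ in topological degree $6$) that do not occur in such a pair. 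I expect no conceptual obstacle: given the collapse and the duality of the first paragraph, the theorem is purely a matter of correctly and completely enumerating the additive $\mathbb{F}_2$-basis of the $E_\infty$-page with its two gradings. The only genuine danger is a bookkeeping error --- an over- or under-count of the exceptional, non-$x_{10}$-paired classes, or a mis-assignment of topological degrees within the $h_{10}$-towers --- so the main consistency check is to verify that specializing the bigraded series~\ref{e-p series} at $u=1$ recovers the single-variable series of the previous paragraph.
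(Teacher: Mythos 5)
Your proposal is correct and follows the same route as the paper, whose proof consists of the single remark that the theorem ``is read off directly from the spectral sequence chart''; you have simply made explicit the justifications (strong convergence of the finite filtration, absence of extension problems over $\mathbb{F}_2$, the duality of Proposition~\ref{duality between hh and cohh}, and the row-by-row count with $c_0,\dots,c_3=5,8,9,10$ and $c_n=8$ for $n\geq 4$) that the paper leaves implicit. Your arithmetic checks out, including the specialization of the bigraded series at $u=1$.
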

\begin{proof}
This information is read off directly from the spectral sequence chart~\ref{ss e3}.
(Note that the horizontal axis in the chart~\ref{ss e3} is the Adams degree, i.e., $u-s$, not the internal/topological degree, i.e., $u$, so one must be a little careful
in reading off the series~\ref{e-p series} from the chart.)
\end{proof}

\section{The Hochschild cohomology of $A(1)$.}
\label{The Hochschild cohomology...}

Finite-dimensional Hopf algebras, such as $A(1)$, are Frobenius algebras. For a Frobenius algebra $A$ over a field $k$, there is a relatively straightforward and well-known\footnote{The author does not know where this duality originally appeared in the literature, but a nice account of the duality appears in \cite{MR3148613} and in section 3.1 of \cite{MR3421088}. See sections 1.1 and 1.2 of \cite{MR3198834} and section 3 of \cite{MR1388568} for good accounts of the most fundamental properties of graded Frobenius algebras. The duality does not seem to be a special case of van den Bergh's Poincar\'{e} duality for Hochschild (co)homology \cite{MR1443171}: as van den Bergh remarks in \cite{MR1900889}, the duality of \cite{MR1443171} requires the ring to be of finite Hochschild dimension, but the calculation of Hochschild homology we have just made in Theorem \ref{e-p series thm} demonstrates that $A(1)$ has infinite Hochschild dimension.} duality between Hochschild cohomology and Hochschild homology, as follows. Write $D(A)$ for the graded $k$-linear dual $\hom_k(A,k)$ of $A$: then $D(A)$ will be concentrated in non{\em positive} degrees, if $A$ is connected. The Frobenius form $\langle -,-\rangle$ on $A$ is the nondegenerate $k$-bilinear form given by fixing an isomorphism of the ground field $k$ with the (necessarily one-dimensional) highest-degree summand of $A$, and then letting $\langle x,y\rangle$ be the projection of the product $xy$ in $A$ to $k$. 

Since $A$ is Frobenius, it also admits a grading-preserving {\em Nakayama automorphism} $\nu: A\rightarrow A$ which is determined by the property that $\langle x,y\rangle = \langle y,\nu(x)\rangle$ for all homogeneous $x,y\in A$. Write $A_{\nu}$ for the $A$-bimodule whose underlying set of elements is $A$, whose left $A$-action is the usual left action of $A$ on itself (i.e., by left multiplication), and whose right $A$-action is given by letting $x\cdot a$ be $x\nu(a)$, i.e., $A$ acts on the right on $A_{\nu}$ by first applying $\nu$ and then multiplying on the right.

The relevant duality result is that $D(A) \cong \Sigma^{-n}A_{\nu}$ as graded $A$-bimodules, where $n$ is the highest degree of a nonzero element of $A$. (Sometimes this number $n$ is called the ``Artin-Schelter index of $A$,'' as in \cite{MR3198834}.) Consequently on the level of the Hochschild chain and cochain complexes $CC_{\bullet}$ and $CC^{\bullet}$, we have:
\begin{align*}
 D(CC_{\bullet}(A,A_{\nu})) 
  &\cong \hom_k(\BAR_{\bullet}(A)\otimes_{A^{\epsilon}} A_{\nu} ,k) \\
  &\cong \hom_{A^{\epsilon}}(\BAR_{\bullet}(A),\hom_k(A_{\nu},k)) \\
  &\cong \hom_{A^{\epsilon}}(\BAR_{\bullet}(A),D(\Sigma^n D(A))) \\
  &\cong CC^{\bullet}(A,\Sigma^{-n} A).
\end{align*}
The upshot is that we have an isomorphism
\begin{align}
\label{iso 340943} HH^i(A,A) &\cong \Sigma^n D\left(HH_i(A, A_{\nu})\right)\end{align}
of graded $k$-vector spaces for each integer $i$.

Using this duality in the case of $A(1)$, we get the following calculation of the Hochschild cohomology of $A(1)$:
\begin{theorem}\label{e-p series thm 2}
The $\mathbb{F}_2$-vector space dimension of the Hochschild cohomology\linebreak $HH^n(A(1),A(1))$ is equal to that of $HH_n(A(1),A(1))$, i.e., 
\[ \dim_{\mathbb{F}_2} HH^n(A(1),A(1)) = \left\{ 
    \begin{array}{ll} 
      2n+5 &\mbox{\ if\ } 2\mid n \\
      2n+6 &\mbox{\ if\ } n\equiv 1 \mod 4 \\
      2n+4 &\mbox{\ if\ } n\equiv 3 \mod 4 .\end{array}\right. \]
If we additionally keep track of the extra grading on $HH^*(A(1),A(1))$ coming from the topological grading on $A(1)$, then 
the Poincar\'{e} series of the bigraded $\mathbb{F}_2$-vector space $HH^{*,*}(A(1),A(1))$ is
\begin{dmath}\label{e-p series 2}
\left( u^{-7}(u+1)\left(u^{12}+u^{10} + su^6(u^5+u^3+1) + s^2u^3(u^7+2u^2+1) + s^3(u^9+u^5+u^4+u^3+1) + \frac{s^4(u^9+u^5+u^4+1)}{u-s}\right) + 1 +su^{4}+su^{-2}+s^2u^{2}\right)\frac{1}{1 - s^4u^{-12}}
\end{dmath}
where $s$ indexes the homological grading and $u$ indexes the topological grading.
\end{theorem}
\begin{proof}
By an elementary calculation, the Nakayama automorphism of $A(1)$ is the identity map, and the Artin-Schelter index of $A(1)$ (i.e., the degree of the socle of $A(1)$) is $6$. Consequently $A(1) = A(1)_{\nu}$, and the isomorphism \eqref{iso 340943} reduces to $HH^i(A(1),A(1)) \cong \Sigma^6 D\left(HH_i(A(1),A(1))\right)$. 

As a consequence, all that is necessary to obtain the Poincar\'{e} series of\linebreak $HH^*(A(1),A(1))$ is to replace $u$ with $u^{-1}$ in the Poincar\'{e} series \eqref{e-p series} for\linebreak $HH_*(A(1),A(1))$, and to multiply the result by $u^6$. This yields \eqref{e-p series 2}.
\end{proof}

Hochschild cohomology is of great use in classifying deformations. The most straightforward deformation-theoretic consequences of the calculation in Theorem \ref{e-p series thm 2} are as follows. Recall (e.g. from \cite{MR1383469}) that a {\em graded deformation} (respectively {\em $n$th order graded deformation} of an associative graded $k$-algebra $A$ is a graded $k$-algebra $A^{\prime}$ over the polynomial ring $k[t]$ (respectively, over the truncated polynomial ring $k[t]/(t^{n+1})$), with $t$ in degree $1$, such that $A^{\prime}$ is free over the module $k[t]$ (respectively, $k[t]/(t^{n+1})$), and
such that $A^{\prime}/tA^{\prime} \cong A$.
\begin{corollary}\label{uniqueness cor}
There are precisely four isomorphism classes of first-order graded deformations of $A(1)$. 

Suppose that $n>4$. If an $n$th order graded deformation of $A(1)$ extends to an $(n+1)$th order graded deformation of $A(1)$, then it does so uniquely, up to isomorphism.

Suppose furthermore that $n>7$. Then every $n$th order graded deformation of $A(1)$ extends to an $(n+1)$th order graded deformation of $A(1)$, unique up to isomorphism.
\end{corollary}
\begin{proof}
Reading off $HH^2$ and $HH^3$ from Theorem \ref{e-p series thm 2} or from the spectral sequence chart \eqref{classical may e3},
we have isomorphisms of graded $\mathbb{F}_2$-vector spaces.
\begin{align*}
 HH^2(A(1),A(1))
  &\cong \Sigma^{-4}\mathbb{F}_2\oplus \Sigma^{-3}\mathbb{F}_2\oplus \Sigma^{-2}\mathbb{F}_2\oplus \Sigma^{-2}\mathbb{F}_2 \\ 
  &\ \ \ \oplus \Sigma^{-1}\mathbb{F}_2\oplus \Sigma^{-1}\mathbb{F}_2\oplus \Sigma^2\mathbb{F}_2\oplus \Sigma^3\mathbb{F}_2\oplus \Sigma^4\mathbb{F}_2,\\
 HH^3(A(1),A(1))
  &\cong \Sigma^{-7}\mathbb{F}_2\oplus \Sigma^{-6}\mathbb{F}_2\oplus \Sigma^{-4}\mathbb{F}_2\oplus \Sigma^{-3}\mathbb{F}_2\oplus \Sigma^{-3}\mathbb{F}_2 \\ 
  &\ \ \ \oplus \Sigma^{-2}\mathbb{F}_2\oplus \Sigma^{-2}\mathbb{F}_2\oplus \Sigma^{-1}\mathbb{F}_2\oplus \Sigma^2\mathbb{F}_2\oplus \Sigma^3\mathbb{F}_2.
\end{align*}
The graded version of the infinitesimal deformation theory of associative algebras (see 1.5 of \cite{MR1383469}, or Proposition 5.4.1 of \cite{MR3971234} for a textbook account) establishes that the isomorphism classes of first-order infinitesimal deformations of a graded algebra $A$ are in bijection with $HH^{2,-1}(A,A)$, while the obstructions to extending an $(n-1)$th order infinitesimal deformation to an $n$th order infinitesimal deformation are cohomology classes in $HH^{3,-n}(A,A)$. When the extension of an $(n-1)$th order deformation to an $n$th order deformation is unobstructed, the set of isomorphism classes of such extensions is in bijection with $HH^{2,-n}(A,A)$.

In the case of $A(1)$, $HH^{2,-1}(A(1),A(1))$ is two-dimensional, hence has four elements. We have $HH^{2,-n}(A(1),A(1))\cong 0$ for all $n > 4$, and we have\linebreak $HH^{3,-n}(A(1),A(1))\cong 0$ for all $n > 7$. 
\end{proof}
More detailed consequences for deformations of $A(1)$ are possible using the Hochschild cohomology calculations in this paper: for example, one could calculate Gerstenhaber brackets in $HH^*(A(1), A(1))$ to determine {\em which} of the four first-order deformations extend to higher-order deformations. We regard such investigations as beyond the scope of this paper, though.


\def\cprime{$'$} \def\cprime{$'$} \def\cprime{$'$} \def\cprime{$'$}

\end{document}